\documentclass[final]{article}
\usepackage{amsmath,mathrsfs,amssymb,amsthm,amscd}
\usepackage{color,epsfig,latexsym,graphicx,eucal,layout,fancyhdr}
\usepackage{enumerate}
\usepackage[]{fontenc}
\usepackage[all]{xy}
\usepackage{hyperref}
\usepackage{graphics}
\usepackage{a4wide,verbatim}
\usepackage{psfrag}

\addtolength{\textheight}{10mm} \addtolength{\topmargin}{-15mm}
\setlength{\parskip}{0.25em} \setlength{\parindent}{0em}
\pagestyle{myheadings}

\newcounter{EQNR}
\setcounter{EQNR}{0}

\newtheorem{theorem}{Theorem}

\newtheorem{corollary}[theorem]{Corollary}

\newtheorem{example}[theorem]{Example}

\newtheorem{lemma}[theorem]{Lemma}

\newtheorem{proposition}[theorem]{Proposition}
\newtheorem{remark}[theorem]{Remark}

\DeclareMathOperator{\PSL}{PSL}
\DeclareMathOperator{\hyp}{hyp}
\let\Im\relax
\DeclareMathOperator{\Im}{Im}
\let\Re\relax
\DeclareMathOperator{\Re}{Re}
\newcommand{\C}{\mathbb{C}}
\newcommand{\h}{\mathbb{H}}

\newcommand{\R}{\mathbb{R}}
\newcommand{\Z}{\mathbb{Z}}
\newcommand{\E}{\mathcal{E}}
\newcommand{\F}{\mathcal{F}}
\newcommand{\eps}{\varepsilon}
\newcommand{\abs}[1]{\left\vert#1\right\vert}
\DeclareMathOperator{\SL}{SL}
\DeclareMathOperator{\vol}{Vol}

\begin{document}

\title{Applications of Kronecker's limit formula for elliptic Eisenstein series}
\author{Jay Jorgenson \and Anna-Maria von Pippich \and Lejla Smajlovi\'{c}
\footnote{\noindent The first named author acknowledges grant support from the NSF and PSC-CUNY.
We thank Professor Floyd Williams for making available to us the unpublished dissertation \cite{Vassileva96}
which was written by his student I. N. Vassileva.  The results in the manuscript were of great interest to us, and
we hope the document will become available to the mathematical community.}}
\maketitle

\begin{abstract}\noindent
We develop two applications of the Kronecker's limit formula associated
to elliptic Eisenstein series:  A factorization theorem for holomorphic modular forms, and a
proof of Weil's reciprocity law. Several examples of the general factorization results are computed, specifically for
certain moonshine groups, congruence subgroups, and, more generally, non-compact subgroups with one cusp.
In particular, we explicitly compute the Kronecker limit function associated to certain elliptic points for a
few small level moonshine groups.
\end{abstract}

\vskip .15in
\section{Introduction and statement of results}

\subsection{Non-holomorphic Eisenstein series.}
Let $\Gamma$ be a Fuchsian group of the first kind which acts on the hyperbolic
space $\mathbb H$ by factional linear transformations, and let $M = \Gamma \backslash \mathbb H$
be the finite volume quotient.  One can view $M$ as a finite volume hyperbolic Riemann
surface, possibly with cusps and elliptic fixed points.  In a slight abuse of notation, we will
use $M$ to denote both the Riemann surface as well as a (Ford) fundamental domain of $\Gamma$
acting on $\mathbb H$.

The abelian subgroups of $\Gamma$ are classified as three distinct types:  Parabolic, hyperbolic and
elliptic.  Accordingly, there are three types of scalar-valued non-holomorphic Eisenstein series, whose
definitions we now recall.

Parabolic subgroups are characterized by having a unique fixed point $P$ on the extended upper-half plane
$\widehat{\mathbb H}$.  The fixed point $P$ is known as a cusp of $M$, and the associated parabolic subgroup
is denoted by $\Gamma_{P}$.
The parabolic Eisenstein series ${\cal E}^{\mathrm{par}}_{P}(z,s)$ associated to $P$ is a defined for
$z\in M$ and $s \in \mathbb{C}$ with $\textrm{Re}(s) > 1$, by the series
\begin{equation*}
{\cal E}^{\mathrm{par}}_{P}(z,s) =
\sum\limits_{\eta \in \Gamma_{P}\backslash \Gamma}\textrm{Im}(\sigma_{P}^{-1}\eta z)^{s},
\end{equation*}
where $\sigma_{P}$
is the scaling matrix for the cusp $P$, i.e. the element of $\mathrm{PSL}_2(\mathbb{R})$ such that, when extending the action of $\sigma_{P}$ to $\widehat{\mathbb H}$,
we have that $\sigma_{P}\infty = P$.

Hyperbolic subgroups have two fixed points on the extended upper-half plane $\widehat{\mathbb H}$.  Let us denote
a hyperbolic subgroup by $\Gamma_{\gamma}$ for $\gamma \in \Gamma$, and let $\mathcal{L}_{\gamma}$ signify the
geodesic path in $\mathbb H$ connecting the two fixed points of hyperbolic element $\gamma$.  Following Kudla and Millson from \cite{KM79},
one defines a scalar-valued hyperbolic Eisenstein series for $z\in M$ and $s \in \mathbb{C}$ with
$\textrm{Re}(s) > 1$ by the series
\begin{equation}\label{hyp_eisen}
{\cal E}^{\mathrm{hyp}}_{\gamma}(z,s) =\sum\limits_{\eta \in \Gamma_{\gamma}\backslash \Gamma}
\cosh(d_{\mathrm{hyp}}(\eta z, \mathcal{L}_{\gamma}))^{-s},
\end{equation}
where $d_{\mathrm{hyp}}(\eta z, \mathcal{L}_{\gamma})$ is the hyperbolic distance from the
point $\eta z$ to  $\mathcal{L}_{\gamma}$.

Elliptic subgroups have finite order and have a unique fixed point within $\mathbb H$.  In fact,
for any point $w \in M$, there is an elliptic subgroup $\Gamma_{w}$ which fixes $w$, where in
all but a finite number of cases $\Gamma_{w}$ is the identity element.  Elliptic Eisenstein series
were defined in an unpublished manuscript from 2004 by Jorgenson and Kramer and were studied in depth
in the 2010 dissertation \cite{vP10} by von Pippich.  Specifically, for $z\in M$, $z\not=w$, and
$s \in \mathbb{C}$ with $\textrm{Re}(s) > 1$, the elliptic Eisenstein series is defined by
\begin{equation}\label{ell_eisen}
{\cal E}^{\textrm{ell}}_{w}(z,s) =\sum\limits_{\eta \in \Gamma_{w}\backslash \Gamma}
\sinh(d_{\mathrm{hyp}}(\eta z, w))^{-s}
\end{equation}
where $d_{\mathrm{hyp}}(\eta z, w)$ denotes the hyperbolic distance from $\eta z$ to $w$.

\subsection{Known properties and relations}

There are some fundamental differences between the three types of Eisenstein series defined above.
Hyperbolic Eisenstein series are in $L^{2}(M)$, whereas parabolic and elliptic series are not.
Elliptic Eisenstein series are defined as a sum over a finite index subset of $\Gamma$, and indeed
the series (\ref{ell_eisen}) can be extended to all $\Gamma$ which would introduce a multiplicative
factor equal to the order of $\Gamma_{w}$.  However, hyperbolic and parabolic series are necessarily
formed by sums over infinite index subsets of $\Gamma$.  Parabolic Eisenstein series are eigenfunctions
of the hyperbolic Laplacian; however, elliptic and hyperbolic Eisenstein series satisfy a differential-difference
equation which involves the value of the series at $s+2$.

Despite their differences, there are several intriguing ways in which the Eisenstein series interact.  Since the hyperbolic
Eisenstein series are in $L^{2}(M)$, the expression (\ref{hyp_eisen}) admits a spectral expansion which involves the
parabolic Eisenstein series; see \cite{JKvP10} and \cite{KM79}.  If one considers a degenerating
sequence of Riemann surfaces obtained by pinching a geodesic, then the associated hyperbolic Eisenstein
series converges to parabolic Eisenstein series on the limit surface; see \cite{Fa07} and \cite{GJM08}.
If one studies a family of elliptically degenerating surfaces obtained by re-uniformizing at a point
with increasing order, then the corresponding elliptic Eisenstein series converge to parabolic Eisenstein series on
the limit surface; see \cite{GvP09}.

Finally, there are some basic similarities amongst the series.  Each series admits a meromorphic continuation
to all $s\in\mathbb{C}$. The poles of the meromorphic continuations have been identified and are closely related,
in all cases involving data associated to the continuous and non-cuspidal discrete spectrum of the
hyperbolic Laplacian and, for hyperbolic and elliptic series, involving data associated to the cuspidal
spectrum as well.  Finally, and most importantly for this article, the hyperbolic and elliptic Eisenstein series are holomorphic at
$s=0$, and for all known instances, the parabolic Eisenstein series also is holomorphic at $s=0$. In all these
cases, the value of each Eisenstein series at $s=0$ is a constant as a function of $z$.
The coefficient of $s$ in the Taylor series expansion about $s=0$
shall be called the Kronecker limit function.

\subsection{Kronecker limit functions}

The classical Kronecker's limit formula is the following statement, which we
quote from \cite{Siegel80}. If we consider the case when $\Gamma = \textrm{PSL}_2(\mathbb{Z})$,
then
\begin{align*}
\mathcal{E}^{\mathrm{par}}_{\infty}(z,s)=
\frac{3}{\pi(s-1)}
-\frac{1}{2\pi}\log\bigl(|\Delta(z)|\Im(z)^{6}\bigr)+C+O(s-1) \,\,\,\,\,\textrm{as $s \rightarrow 1$,}
\end{align*}
where $C=6(1-12\,\zeta'(-1)-\log(4\pi))/\pi$, and with Dedekind's delta function $\Delta(z)$ given by
$$
\Delta(z) = \left[q_{z}^{1/24}\prod\limits_{n=1}^{\infty}\left(1 - q_{z}^{n}\right)\right]^{24} = \eta(z)^{24}
\,\,\,\,\,\textrm{with $q_{z} = e^{2\pi i z}$.}
$$
By employing the well-known functional equation for $\mathcal{E}^{\mathrm{par}}_{\infty}(z,s)$,
Kronecker's limit formula can be reformulated as
\begin{equation*}
\mathcal{E}^{\mathrm{par}}_{\infty}(z,s)=
1+ \log\bigl(|\Delta(z)|^{1/6}\Im(z)\bigr)s+O(s^2) \,\,\,\,\,\textrm{as $s \rightarrow 0$.}
\end{equation*}

For general Fuchsian groups of the first kind, Goldstein \cite{Go73} studied analogue of Kronecker's limit formula
associated to parabolic Eisenstein series.  We will use the results from \cite{Go73} throughout this article.

The hyperbolic Eisenstein series in \cite{KM79} are form-valued, and
the series are defined by an infinite sum which converges for $\textrm{Re}(s) > 0$.  The main result in \cite{KM79}
is that the the form-valued hyperbolic Eisenstein series is holomorphic at $s=0$, and the value is equal to the harmonic form that is
the Poincar\'e dual to the one-cycle in the homology group $H^{1}(M,\mathbb R)$ corresponding to the hyperbolic geodesic
$\gamma$ fixed by $\Gamma_{\gamma}$.

The analogue of Kronecker's limit formula for elliptic Eisenstein series was first proved in \cite{vP10} and \cite{vP15}. Specifically,
it is shown that at $s=0$, the series (\ref{ell_eisen}) admits the Laurent expansion
\begin{align}\label{Kronecker_elliptic}
\mathrm{ord}(w)\,\mathcal{E}^{\mathrm{ell}}_{w}(z,s)&-
\frac{2^{s}\sqrt{\pi}\,\Gamma(s-\frac{1}{2})}{\Gamma(s)}
\sum\limits_{k=1}^{p_{\Gamma}}\mathcal{E}^{\mathrm{par}}_{p_{k}}(w,1-s)
\,\mathcal{E}^{\mathrm{par}}_{p_{k}}(z,s)= \notag \\
&-c
-\log\bigl(|H_{\Gamma}(z,w)|^{\mathrm{ord}(w)}(\Im(z))^{c}\bigr)\cdot s+O(s^2)\,\,\,\,\,\textrm{as $s \rightarrow 0$,}
\end{align}
where $p_k$, $k=1,\ldots, p_{\Gamma}$, are cusps of $M$, $c=2\pi/\vol_{\hyp}(M)$, and $H_{\Gamma}(z,w)$ is a holomorphic
automorphic function with respect to $\Gamma$ and which vanishes only when
$z=\eta w$ for some $\eta\in\Gamma$.  Two explicit computations are given in \cite{vP10} and \cite{vP15}
for $\Gamma = \PSL_2(\Z)$ when considering the elliptic Eisenstein series $E^{\mathrm{ell}}_{w}(z,s)$
associated to the points $w=i$ and $w=\rho = (1+i\sqrt{3})/2$.  In these cases, the elliptic Kronecker limit function
$H_{\Gamma}(z,w)$ at points $w=i$ and $w=\rho$ is such that

\begin{equation}\label{elliptic_at_i}
\abs{H_{\Gamma}(z,i)}= \exp(-B_i)\abs{E_6(z)}, \text{   where   } B_i=-3(24\zeta'(-1)-\log(2\pi)+4\log\Gamma(1/4))
\end{equation}
and
\begin{equation}\label{elliptic_at_rho}
\abs{H_{\Gamma}(z,\rho)}=\exp(-B_{\rho})\abs{E_4(z)}, \text{   where   } B_{\rho}=-2(24\zeta'(-1)-2\log(2\pi/\sqrt{3})+6\log\Gamma(1/3)).
\end{equation}

The Kronecker limit formula for elliptic Eisenstein series became the asymptotic formulas
\begin{equation}\label{elliptic Eis_at_i}
\E^{\mathrm{ell}}_{i}(z,s)= -\log(\vert E_{6}(z)\vert \vert\Delta(z)\vert^{-1/2})\cdot s + O(s^{2})
\,\,\,\,\,\text{\rm as $s\rightarrow 0$,}
\end{equation}
and
\begin{equation}\label{elliptic Eis_at_rho}
\E^{\mathrm{ell}}_{\rho}(z,s)= -\log(\vert E_{4}(z)\vert \vert\Delta(z)\vert^{-1/3})\cdot s + O(s^{2})
\,\,\,\,\,\text{\rm as $s\rightarrow 0$,}
\end{equation}
where $E_{4}$ and $E_{6}$ are classical holomorphic Eisenstein series on $\PSL_2(\Z)$ of weight four and six, respectively.

Before continuing, let us state what we believe to be an interesting side comment.  The Kronecker limit function associated to elliptic
Eisenstein series is naturally defined as the coefficient of $s$ in the Laurent expansion of the elliptic Eisenstein series near $s=0$.
As we show below, one can realize the Kronecker limit function for parabolic Eisenstein series for groups with one cusp as the coefficient
of $s$ in the Laurent expansion of the parabolic Eisenstein series at $s=0$.  One has yet to study the Laurent expansion near $s=0$, in particular
the coefficient of $s$, for the scalar-valued hyperbolic Eisenstein series; for that matter, we have not fully understood
the analoguous question for the vector of parabolic Eisenstein series for general groups.  We expect that one can develop systematic theory by
focusing on coefficients of $s$ in all cases.

\subsection{Important comment and assumption}\label{subsection_assumption}

At this time, we do not have a complete understanding of the behavior of the parabolic Eisenstein series
${\cal E}^{\mathrm{par}}_{P}(z,s)$ near $s=0$.  If the group has one cusp, the functional equation of the
Eisenstein series shows that ${\cal E}^{\mathrm{par}}_{P}(z,0)=1$.  In notation to be set below, its
scattering determinant is zero at $s=0$.  However, this is not true when there is more than one cusp.  For example,
on page 536 of \cite{He83}, the author computes the scattering matrix for $\Gamma_{0}(N)$ for square-free $N$, from
which it is clear that $\Phi(s)$ is holomorphic but not zero at $s=0$. Specifically, it remains to determine if the parabolic
Eisenstein series is holomorphic at $s=0$, which is a question we were unable to answer in complete generality.

\vskip .10in
\it Throughout this article, we assume that ${\cal E}^{\mathrm{par}}_{P}(z,s)$ is holomorphic at $s=0$.
\rm

\vskip .10in
\noindent
The assumption is true in all the instances where specific examples are developed.

\subsection{Main results}
The purpose of the present paper is to further study the Kronecker limit function associated to
elliptic Eisenstein series.  We develop two applications.  To begin, we examine the relation (\ref{Kronecker_elliptic})
and study the contribution near $s=0$ of the term involving the parabolic Eisenstein series.  As with the parabolic
Eisenstein series, the resulting expression is particularly simple in the case when the group $\Gamma$ has one cusp.
However, in all cases, we obtain an asymptotic formula for $\mathcal{E}^{\mathrm{ell}}_{w}(z,s)$ near $s=0$ which allows
us to prove asymptotic bounds for the elliptic Kronecker limit function in any parabolic cusp associated to $\Gamma$.
As a consequence, we are able to prove the main result of this article, namely a factorization theorem
which expresses holomorphic forms on $M$ of arbitrary weight as products of the elliptic Kronecker limit functions.

The product formulas are developed in detail in the case of so-called moonshine groups, which are discrete
groups obtained by adding the Fricke involutions to the congruence subgroups $\Gamma_{0}(N)$.  As an application
of the factorization theorem, we establish further examples of relations similar to (\ref{elliptic_at_i}),
(\ref{elliptic_at_rho}), \eqref{elliptic Eis_at_i} and \eqref{elliptic Eis_at_rho}.
For example, the moonshine group $\Gamma = \overline{\Gamma_0(2)^+} = \Gamma_0(2)^+/\{\pm \textrm{Id}\}$ has $e_{2}=1/2 + i/2$ as a fixed point of
order four.
In section 6.2, we prove that the elliptic Kronecker limit function $H_2(z,e_2)$ associated to the point $e_2$ is such that
$$
\abs{H_2(z,e_2)} = \exp(-B_{2,e_2})\abs{E_{4}^{(2)}(z)}^{1/2},
$$
where $E_{4}^{(2)}(z)$ is the weight four holomorphic Eisenstein series associated to $\Gamma_{0}(2)^{+}$ and
$$
B_{2,e_2}=- \left( 24\zeta'(-1) + \log(8\pi^2)
- \frac{11}{6} \log 2 +\frac{1}{12} \log\left( \left| \Delta(1/2 + i/2) \cdot \Delta(1+i) \right| \right)\right).
$$
In this case, the Kronecker limit formula for the elliptic Eisenstein series $\E^{\mathrm{ell}}_{e_{2}}(z,s)$ reads as
\begin{equation*}
\E^{\mathrm{ell}}_{e_{2}}(z,s)= -\log\left(\vert E_{4}^{(2)}(z)\vert^{1/2} \vert \Delta(z)\Delta(2z) \vert ^{-1/12}\right)\cdot s + O(s^{2})
\,\,\,\,\,\text{\rm as $s\rightarrow 0$,}
\end{equation*}
or, equivalently, as
\begin{equation} \label{ell Eis at e_2}
\E^{\mathrm{ell}}_{e_{2}}(z,s)= -\log\left(\frac{1}{\sqrt{5}}\vert E_{4}(z) + 4E_4(2z)\vert^{1/2} \vert \Delta(z)\Delta(2z) \vert ^{-1/12}\right)\cdot s + O(s^{2})
\,\,\,\,\,\text{\rm as $s\rightarrow 0$.}
\end{equation}

The factorization theorem allows one to formulate numerous of examples of this type, of which we develop a few
for certain moonshine and congruence subgroups.

Second, we use the elliptic Kronecker limit formula to give a new proof of Weil's reciprocity formula.  A number of
authors have obtained generalizations of Weil's reciprocity law; see, for example, the elegant presentation in \cite{Kh08} which
discusses various reciprocity laws over $\mathbb C$ as well as Deligne's article \cite{De91} where the author re-interprets Tate's
local symbol and obtains a number of generalizations and applications.  It would be interesting to study the possible connection between
the functional analytic method of the present and companion article \cite{JvPS14} with the algebraic ideas in \cite{De91} and
results surveyed in \cite{Kh08}.

An outline of this article is as follows.  In section 2 we establish notation and cite various results from the literature.  In
section 3, we reformulate Kronecker's limit formula for parabolic Eisenstein series as an asymptotic statement near $s=0$.  From the
results in section 3, we then prove, in section 4, the asymptotic behavior in the cusps of the elliptic Kronecker limit function.
Specific examples are given for moonshine groups $\overline{\Gamma_{0}(N)^{+}}$ with square-free level $N$ and congruence subgroups
$\overline{\Gamma_{0}(p)}$
with prime level $p$.  In section 5 we prove the factorization theorem which states, in somewhat vague terms, that any holomorphic
form on $M$ can be written as a product of elliptic Kronecker limit functions, up to a multiplicative constant.
In addition, from the asymptotic formula from section 4, one is able to obtain specific information associated to the multiplicative
constant in the aforementioned description of the factorization theorem.  In section 6 we give examples of the factorization theorem
for holomorphic Eisenstein series for the modular group, for moonshine groups of levels $2$ and $5$, for general moonshine groups,
and for congruence subgroups $\overline{\Gamma_{0}(p)}$ of prime level.  Finally, in section 7, we present our proof of Weil's reciprocity
using the elliptic Kronecker limit functions and state a few concluding remarks.

\section{Background material}

\subsection{Basic notation} \label{notation}
Let $\Gamma\subseteq\mathrm{PSL}_{2}(\mathbb{R})$ denote a Fuchsian
group of the first kind acting by fractional
linear transformations on the hyperbolic upper half-plane $\mathbb{H}:=\{z=x+iy\in\mathbb{C}\,
|\,x,y\in\mathbb{R};\,y>0\}$. We let $M:=\Gamma\backslash\mathbb{H}$, which is a finite
volume hyperbolic Riemann surface, and denote by $p:\mathbb{H}\longrightarrow M$
the natural projection. We assume that $M$ has $e_{\Gamma}$
elliptic fixed points and $p_{\Gamma}$ cusps. We identify $M$
locally with its universal cover $\mathbb{H}$.

We let $\mu_{\mathrm{hyp}}$ denote the hyperbolic metric on $M$, which is compatible with the
complex structure of $M$, and has constant negative curvature equal to minus one.
The hyperbolic line element $ds^{2}_{\hyp}$, resp.~the hyperbolic Laplacian
$\Delta_{\hyp}$, are given as
\begin{align*}
ds^{2}_{\hyp}:=\frac{dx^{2}+dy^{2}}{y^{2}},\quad\textrm{resp.}
\quad\Delta_{\hyp}:=-y^{2}\left(\frac{\partial^{2}}{\partial
x^{2}}+\frac{\partial^{2}}{\partial y^{2}}\right).
\end{align*}
By $d_{\mathrm{hyp}}(z,w)$ we denote the hyperbolic distance from $z\in\mathbb{H}$ to
$w\in\mathbb{H}$.


%


\subsection{Moonshine groups}

Let $N=p_1\cdots p_r$ be a square-free, non-negative integer.
The subset of $\SL_2(\R)$, defined by
\begin{align*}
  \Gamma_0(N)^+:=\left\{ e^{-1/2}\begin{pmatrix}a&b\\c&d\end{pmatrix}\in
    \SL_2(\R): \,\,\, ad-bc=e, \,\,\, a,b,c,d,e\in\Z, \,\,\, e\mid N,\ e\mid a,
    \ e\mid d,\ N\mid c \right\}
\end{align*}
is an arithmetic subgroup of $\SL_2(\R)$.  We use the terminology ``moonshine group''
of level $N$ to describe $\Gamma_0(N)^+$ because of the important role these groups
play in ``monstrous moonshine''.  Previously, the groups $\Gamma_0(N)^+$ were studied in
\cite{Hel66} where it was proved that if a subgroup $G\subseteq\SL_2(\R)$
is commensurable with $\SL_2(\Z)$, then there exists a square-free,
non-negative integer $N$ such that $G$ is a subgroup of $\Gamma_0(N)^+$.
We also refer to page 27 of \cite{Sh71} where the groups $\Gamma_0(N)^+$
are cited as examples of groups which are commensurable with $\SL_2(\Z)$
but non necessarily conjugate to a subgroup of $\SL_2(\Z)$.

Let $\{\pm \textrm{Id}\}$ denote the set of two elements consisting of the identity matrix $\textrm{Id}$ and its product with $-1$.
In general, if $\Gamma$ is a subgroup of $\SL_2(\R)$, we let $\overline{\Gamma} := \Gamma /\{\pm \textrm{Id}\}$ denote its
projection into $\textrm{PSL}_2(\R)$.

\subsection{Holomorphic Eisenstein series}

Following \cite{Se73}, we define a weakly modular form $f$ of weight $2k$ for $k \geq  1$ associated to $\Gamma$ to be a
function $f$ which is meromorphic on $\mathbb H$ and satisfies the transformation property
$$
f\left(\frac{az+b}{cz+d}\right) = (cz+d)^{-2k}f(z)
\,\,\,\,\,\textrm{for all $\begin{pmatrix}a&b\\c&d\end{pmatrix} \in \Gamma$.}
$$

Let $\Gamma$ be a Fuchsian group of the first kind that has at least one class of parabolic elements. By rescaling, if necessary, we may always
assume that the parabolic subgroup of $\Gamma$ has a fixed point at $\infty$, with identity scaling matrix. In this situation, any weakly modular
form $f$ will satisfy the relation $f(z+1)=f(z)$, so we can write
$$
f(z) = \sum\limits_{n=-\infty}^{\infty}a_{n}q_z^{n}
\,\,\,\,\,\textrm{where $q_z =e(z)= e^{2\pi iz}$.}
$$
If $a_{n} = 0$ for all $n < 0$, then $f$ is said to be holomorphic at the cusp at $\infty$.

A holomorphic modular form with respect to $\Gamma$ is a weakly modular form which is holomorphic on $\mathbb H$ and
in all of the cusps of $\Gamma$.  Examples of holomorphic modular forms are the holomorphic Eisenstein series, which are defined
as follows.  Let $\Gamma_{\infty}$ denote the subgroup of $\Gamma$ which stabilizes the cusp
at $\infty$.  For $k \geq 2$, let

\begin{equation} \label{E_2k, Gamma}
E_{2k,\Gamma}(z) := \sum_{\left(
    \begin{smallmatrix}
          * & * \\
          c & d \\
        \end{smallmatrix}
      \right) \in \Gamma_{\infty} \setminus \Gamma
}
(cz + d)^{-2k}.
\end{equation}
It is elementary to show that the series on the right-hand side of \eqref{E_2k, Gamma} is absolutely convergent for all integers
$k \geq 2$ and defines a holomorphic modular form of weight $2k$ with respect to $\Gamma$. Furthermore, the series $E_{2k, \Gamma}$ is bounded and non-vanishing
at cusps and such that
\begin{equation*}
E_{2k, \Gamma} (z) = 1 + O (\exp(-2\pi \Im (z))), \text{   as   } \Im (z) \to \infty.
\end{equation*}

When $\Gamma=\mathrm{PSL}_2(\Z)$, we denote $E_{2k, \mathrm{PSL}_2(\Z)}$ by $E_{2k}$.
The holomorphic forms $E_{2k}(z)$ have the $q-$expansions
$$
E_{2k}(z) = 1- \frac{4k}{B_{2k}} \sum_{n=1}^{\infty} \sigma_{2k-1}(n) q_z^n,
$$
where $B_{2k}$ denotes the $2k-$th Bernoulli number and $\sigma_l$ is the generalized divisor function, which
is defined by $\sigma_l(m) = \sum\limits_{d \mid m} d^l$.
By convention, we set $\sigma(m)=\sigma_1(m)$.

On the full modular surface, there is no weight $2$ holomorphic modular form.
Consider, however, the function $E_2(z)$ defined by its $q$-expansion
$$
E_2(z) = 1-24 \sum_{n=1}^{\infty} \sigma(n) q_z^n
$$
which transforms according to the formula
$$
E_2(\gamma z) = (cz+d)^2 E_2(z) + \frac{6}{\pi i}c (cz+d),
$$
for $\left(
        \begin{smallmatrix}
          * & * \\
          c & d \\
        \end{smallmatrix}
      \right) \in \textrm{PSL}_2(\mathbb{Z})$.
It is elementary to show that for a prime $p$, the function
\begin{equation} \label{E_2,p}
E_{2,p}(z) := E_2(z)-pE_2(pz)
\end{equation}
is a weight 2 holomorphic form associated to the congruence subgroup $\overline{\Gamma_0(p)}$ of $\textrm{PSL}_2(\mathbb{Z})$. The $q-$expansion
of $E_{2,p}$ is
\begin{equation}\label{q-exp E_2,p}
E_{2,p}(z)= (1-p) - 24\sum_{n=1}^{\infty}\sigma(n) (q_z^n - pq_z^{pn}).
\end{equation}

When $\Gamma = \overline{\Gamma_{0}^+(N)}$, we denote the forms $E_{2k, \overline{\Gamma_{0}^+(N)}}$ by $E_{2k}^{(N)}$.
In \cite{JST13} it is proved that $E_{2k}^{(N)}(z)$ may be expressed as a linear combination of forms $E_{2k}(z)$, with dilated arguments,
namely
\begin{align}\label{E_k, p proposit fla}
E_{2k}^{(N)}(z)= \frac1{\sigma_k(N)} \sum_{v \mid N}v^k E_{2k}(vz).
\end{align}





\subsection{Scattering matrices}

Assume that the surface $M$ has $p_{\Gamma}$ cusps, we let $P_{j}$ with $j=1,\ldots, p_{\Gamma}$ denote the
individual cusps.
Denote by $\phi_{jk}$, with $j,k=1, \ldots, p_{\Gamma}$, the entries of the hyperbolic scattering
matrix $\Phi_M(s)$ which are computed from the
constant terms in the Fourier expansion of the parabolic Eisenstein
series $\E^{\mathrm{par}}_{P_j}(z,s)$ associated to cusp $P_{j}$ in an expansion in the cusp $P_{k}$.
For all $j,k = 1,\ldots, p_{\Gamma}$, each function $\phi_{jk}$ has a simple pole at $s=1$ with residue
equal to $1/\vol_{\hyp}(M)$. Furthermore, $\phi_{jk}$ has a Laurent series expansion at $s=1$ which we write as
\begin{equation}\label{phi exp at s=1}
\phi_{jk}(s)= \frac{1}{\vol_{\hyp}(M) (s-1)} + \beta_{jk} + \gamma_{jk}(s-1) + O((s-1)^2), \text{  as  } s\to 1.
\end{equation}
After a slight renormalization and trivial generalization, Theorem 3-1 from \cite{Go73} asserts that the parabolic
Eisenstein series $\E^{\mathrm{par}}_{P_j}(z,s)$ admits the Laurent expansion
\begin{equation} \label{KronLimitPArGen}
\E^{\mathrm{par}}_{P_j}(z,s)= \frac{1}{\vol_{\hyp}(M) (s-1)} + \beta_{jj} - \frac{1}{\vol_{\hyp}(M)} \log \abs{\eta_{P_j}^4(z) \Im(z)} + f_j(z) (s-1) +  O((s-1)^2),
\end{equation}
as  $s \to 1$, for $j=1,\ldots, p_{\Gamma}$.

As the notation suggestions, the function $\eta_{P_j}(z)$ is a holomorphic form for $\Gamma$ and is a generalization of the classical eta function for the
full modular group. To be precise, $\eta_{P_j}(z)$ is an automorphic form corresponding to the multiplier system $v(\sigma)= \exp(i\pi S_{\Gamma,j}(\sigma))$,
where $S_{\Gamma,j}(\sigma)$ is a generalization of a Dedekind sum attached to a cusp $P_j$ for each $j=1,\ldots,p_{\Gamma}$ of $M$, meaning a real number
uniquely determined for every $\sigma = \left(
                                             \begin{smallmatrix}
                                               \ast & \ast \\
                                               c & d \\
                                             \end{smallmatrix}
                                           \right)\in \Gamma
$ which satisfies the relation
$$
\log\eta_{P_j}(\sigma(z))=\log\eta_{P_j}(z) + \frac{1}{2} \log (cz+d) + \pi i S_{\Gamma,j}(\sigma).
$$

The coefficient $f_j(z)$ multiplying $(s-1)$ in formula \eqref{KronLimitPArGen}
is a certain function, whose behavior is not of interest to us in this paper. This term would
probably yield to a definition of generalized Dedekind sums; see, for example, \cite{Ta86}.

Finally, let us set the notation
\begin{equation}\label{phi exp at s=0}
\phi_{jk}(s)= a_{jk} + b_{jk}s + c_{jk}s^2 + O(s^3) \,\,\,\,\,\textrm{as $s \rightarrow 0$}
\end{equation}
for the coefficients in the Laurent expansion of $\phi_{jk}$ near $s=0$.
Note that the form of this expansion is justified by the assumption made
in subsection \ref{subsection_assumption}.


\section{Kronecker's limit formula for parabolic Eisenstein series}\label{sec: Kron limir parabolic}

\vskip .10in
In this section we will re-write the Kronecker limit formula for the parabolic Eisenstein series as an expression involving
the Laurent expansion near $s=0$.  We begin with the following lemma which states certain relations amongst coefficients
appearing in \eqref{phi exp at s=1} and \eqref{phi exp at s=0}.  \it To repeat, we assume that each parabolic
Eisenstein series ${\cal E}^{\mathrm{par}}_{P_j}(z,s)$ is holomorphic at $s=0$. \rm

\begin{lemma}
With the notation in \eqref{phi exp at s=1} and \eqref{phi exp at s=0}, we have, for each $k, l = 1,\ldots,p_{\Gamma}$, the
following relations:
\begin{equation} \label{sum a_jk}
\sum_{j=1}^{p_{\Gamma}} a_{jk} = 0,
\end{equation}
\begin{equation} \label{sum with b_jk}
\sum_{j=1}^{p_{\Gamma}}\left( - \frac{b_{jk}}{\vol_{\hyp}(M)} + a_{jk}\beta_{jl}\right) = \delta_{kl},
\end{equation}
\begin{equation}\label{sum with c_jk}
\sum_{j=1}^{p_{\Gamma}}\left(- \frac{c_{jk}}{\vol_{\hyp}(M)} +b_{jk}\beta_{jl}\right) = \sum_{j=1}^{p_{\Gamma}} a_{jk}\gamma_{jl},
\end{equation}
where $\delta_{kl}$ is the Kronecker symbol.
\end{lemma}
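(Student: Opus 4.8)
The plan is to derive all three identities at once by comparing Laurent coefficients on the two sides of the functional equation satisfied by the scattering matrix $\Phi_M(s)$. Recall that $\Phi_M(s)$ is symmetric, so $\phi_{jk}=\phi_{kj}$, and that it obeys the functional equation $\Phi_M(s)\Phi_M(1-s)=\mathrm{Id}$. Reading off the $(k,l)$ entry and using symmetry to turn the first factor's middle index into a leading index, this becomes
\[
\sum_{j=1}^{p_{\Gamma}} \phi_{jk}(s)\,\phi_{jl}(1-s) = \delta_{kl}
\]
for all $k,l=1,\ldots,p_{\Gamma}$ and all $s$. Since the right-hand side is constant in $s$, every Laurent coefficient of the left-hand side at $s=0$ other than the constant term must vanish; the three displayed relations will fall out as the coefficients of $s^{-1}$, $s^{0}$, and $s^{1}$, respectively.

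First I would expand each factor near $s=0$. For $\phi_{jk}(s)$ I use \eqref{phi exp at s=0}, whose very form (no pole) is licensed by the standing assumption that the parabolic Eisenstein series, and hence $\Phi_M$, is holomorphic at $s=0$. For the second factor I substitute $1-s$ into the expansion \eqref{phi exp at s=1} near $s=1$; because $(1-s)-1=-s$, this reflection gives
\[
\phi_{jl}(1-s) = -\frac{1}{\vol_{\hyp}(M)\,s} + \beta_{jl} - \gamma_{jl}\,s + O(s^2) \qquad \text{as } s\to 0.
\]
The only place sign errors can enter is precisely this reflection $(s-1)\mapsto -s$, which flips the sign of both the polar term and the $\gamma_{jl}$ term.

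Then I multiply the two expansions and collect powers of $s$. The coefficient of $s^{-1}$ is $-\frac{1}{\vol_{\hyp}(M)}\sum_j a_{jk}$; holomorphy of the product forces it to vanish, and clearing the nonzero factor $1/\vol_{\hyp}(M)$ gives \eqref{sum a_jk}. The coefficient of $s^{0}$ is $\sum_j\bigl(a_{jk}\beta_{jl} - b_{jk}/\vol_{\hyp}(M)\bigr)$, which must equal $\delta_{kl}$, yielding \eqref{sum with b_jk}. The coefficient of $s^{1}$ is $\sum_j\bigl(-a_{jk}\gamma_{jl} + b_{jk}\beta_{jl} - c_{jk}/\vol_{\hyp}(M)\bigr)$, which vanishes; rearranging the $\gamma_{jl}$ term to the right produces \eqref{sum with c_jk}.

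I do not expect a genuine obstacle here: the entire content lies in invoking the functional equation in its correct symmetric form and in tracking the signs produced by the reflection $s\mapsto 1-s$. The one hypothesis doing real work, as opposed to pure algebra, is the holomorphy of $\Phi_M$ at $s=0$ emphasized in subsection \ref{subsection_assumption}; it guarantees that the left-hand product has no pole at $s=0$ and thereby forces the $s^{-1}$ relation \eqref{sum a_jk}, without which the remaining two identities could not be read off in this clean form.
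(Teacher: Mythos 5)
Your proof is correct and takes essentially the same route as the paper: the paper likewise derives all three identities by expanding the functional equation $\Phi_M(s)\Phi_M(1-s)=\mathrm{Id}$ in a Laurent series at $s=0$ and reading off the coefficients of $s^{-1}$, $s^{0}$, and $s^{1}$. Your write-up is simply more explicit, spelling out the reflection $(s-1)\mapsto -s$, the sign bookkeeping, and the use of the symmetry $\phi_{jk}=\phi_{kj}$ that the paper leaves implicit.
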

\begin{proof}
The relations \eqref{sum a_jk} through \eqref{sum with c_jk} are immediate consequences of the functional equation for the scattering determinant,
namely the formula $\Phi_M(s)\Phi_M(1-s) = \textrm{Id}$.  In particular, the formulae are obtained by computing the
coefficients of $s^{-1}$, $1$, and $s$ in the Laurent expansion near $s=0$.
\end{proof}

\vskip .10in
\begin{proposition}\label{prop: Kronecker limit as s to 0}
With the notation in \eqref{phi exp at s=1} and \eqref{phi exp at s=0}, the parabolic Eisenstein series
$\E^{\mathrm{par}}_{P_j}(z,s)$ has a Taylor series expansion at $s=0$ which can be written as
\begin{multline} \label{parabolic Kron limit as s to 0}
\E^{\mathrm{par}}_{P_j}(z,s) = \sum_{k=1}^{p_{\Gamma}} \left[ - \frac{b_{jk}}{\vol_{\hyp}(M)} + a_{jk}\left( \beta_{kk} - \frac{1}{\vol_{\hyp}(M)}
\log \left|\eta_{P_k}^4(z)\Im z \right|\right) \right] +\\
+ s \cdot \sum_{k=1}^{p_{\Gamma}} \left[ - \frac{c_{jk}}{\vol_{\hyp}(M)} + b_{jk}\left( \beta_{kk} - \frac{1}{\vol_{\hyp}(M)} \log \left|\eta_{P_k}^4(z)\Im z \right|\right)
+a_{jk}f_k(z) \right] + O(s^2).
\end{multline}
\end{proposition}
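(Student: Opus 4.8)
The plan is to read \eqref{parabolic Kron limit as s to 0} directly off the functional equation of the parabolic Eisenstein series, feeding in the two Laurent expansions \eqref{KronLimitPArGen} and \eqref{phi exp at s=0}. The scattering matrix $\Phi_M(s)=(\phi_{jk}(s))$ is defined so that, for each $j$,
\begin{equation*}
\E^{\mathrm{par}}_{P_j}(z,s)=\sum_{k=1}^{p_{\Gamma}}\phi_{jk}(s)\,\E^{\mathrm{par}}_{P_k}(z,1-s).
\end{equation*}
As $s\to 0$ the argument $1-s$ approaches $1$, so the factors $\E^{\mathrm{par}}_{P_k}(z,1-s)$ are controlled by the behaviour near $s=1$ recorded in \eqref{KronLimitPArGen}, while the factors $\phi_{jk}(s)$ are controlled by \eqref{phi exp at s=0}. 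The whole proof is then the multiplication of two Laurent series and the collection of powers of $s$.

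Concretely, I would first substitute $s\mapsto 1-s$ in \eqref{KronLimitPArGen}. Because $(1-s)-1=-s$, the simple pole at $s=1$ becomes a simple pole at $s=0$ of opposite sign and the linear coefficient is negated, so that
\begin{equation*}
\E^{\mathrm{par}}_{P_k}(z,1-s)= -\frac{1}{\vol_{\hyp}(M)\,s}+\left(\beta_{kk}-\frac{1}{\vol_{\hyp}(M)}\log\abs{\eta_{P_k}^{4}(z)\,\Im(z)}\right)-f_k(z)\,s+O(s^2)
\end{equation*}
as $s\to 0$. Inserting this together with $\phi_{jk}(s)=a_{jk}+b_{jk}s+c_{jk}s^{2}+O(s^{3})$ and multiplying, each product $\phi_{jk}(s)\,\E^{\mathrm{par}}_{P_k}(z,1-s)$ acquires a Laurent expansion whose coefficients of $s^{-1}$, $s^{0}$ and $s^{1}$ are elementary bilinear combinations of $a_{jk},b_{jk},c_{jk}$, the quantity $\beta_{kk}-\vol_{\hyp}(M)^{-1}\log\abs{\eta_{P_k}^{4}(z)\,\Im(z)}$, and $f_k(z)$.

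It then remains to sum over $k$. The coefficient of $s^{-1}$ equals $-\big(\sum_{k}a_{jk}\big)/\vol_{\hyp}(M)$, which vanishes: this is exactly the holomorphy of $\E^{\mathrm{par}}_{P_j}(z,s)$ at $s=0$ assumed in subsection \ref{subsection_assumption}, and it is also the $s^{-1}$-coefficient of the identity $\Phi_M(s)\Phi_M(1-s)=\mathrm{Id}$, i.e. the companion of \eqref{sum a_jk} for the opposite index (equivalently a consequence of the symmetry of $\Phi_M$). With the principal part gone, reading off the coefficients of $s^{0}$ and of $s^{1}$ reproduces, respectively, the two bracketed sums of \eqref{parabolic Kron limit as s to 0}; the $f_k$-term contributes only to the coefficient of $s$, through $a_{jk}$, and since its precise form is immaterial to everything that follows its sign need not be pinned down.

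The computation is routine apart from one point that must be handled with care: the sign bookkeeping produced by the substitution $s\mapsto 1-s$ in \eqref{KronLimitPArGen}, under which both the principal part and the linear term change sign. I do not expect any real obstacle beyond checking that the $s^{-1}$ contributions cancel, which the holomorphy assumption guarantees.
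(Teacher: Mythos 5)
Your proposal is correct and is essentially the paper's own proof: the paper's argument is precisely this computation, expanding the functional equation $\E^{\mathrm{par}}_{P_j}(z,s)=\sum_{k}\phi_{jk}(s)\,\E^{\mathrm{par}}_{P_k}(z,1-s)$ via \eqref{KronLimitPArGen} (with $s\mapsto 1-s$) and \eqref{phi exp at s=0}, the $s^{-1}$ terms cancelling by the holomorphy assumption of subsection \ref{subsection_assumption} (equivalently by the scattering relations). The one point worth recording is that your sign bookkeeping honestly yields $-a_{jk}f_k(z)$ where \eqref{parabolic Kron limit as s to 0} prints $+a_{jk}f_k(z)$; as you observe this is immaterial, since $f_k$ is an otherwise unspecified coefficient that enters all later arguments only through $\sum_{j}a_{jk}f_k(w)=0$, so the paper's sign amounts to a harmless renaming of $f_k$.
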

\begin{proof}
The result is a straightforward computation based on the functional equation
$$
(\E^{\mathrm{par}}_{1}(z,s)\,\, ....\,\, \E^{\mathrm{par}}_{p}(z,s))^{T} = \Phi_M(s)
(\E^{\mathrm{par}}_{1}(z,1-s)\,\, ....\,\, \E^{\mathrm{par}}_{p}(z,1-s))^{T}
$$
together with the expansions \eqref{KronLimitPArGen} and \eqref{phi exp at s=0}.
\end{proof}

In the case when $p_{\Gamma}=1$, the relations \eqref{sum a_jk} through \eqref{sum with c_jk} and Proposition \ref{prop: Kronecker limit as s to 0}
become particularly simple and yield an elegant statement.  As is standard, the cusp is normalized to be at $\infty$, and
the associated Eisenstein series, eta function, scattering coefficients, etc.~are written with the subscript $\infty$.

\begin{corollary} \label{Kron limit as s to 0, one cusp}
The Kronecker limit formula for parabolic Eisenstein series $\E^{\mathrm{par}}_{\infty}$ on a finite volume Riemann surface with one cusp at $\infty$
can be written as
\begin{equation} \label{KronLimas s to 0}
\E^{\mathrm{par}}_{\infty}(z,s)= 1+ \log (\abs{\eta_{\infty}^4(z)} \Im(z))s +  O(s^2), \text{  as  } s \to 0.
\end{equation}
\end{corollary}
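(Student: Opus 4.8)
The plan is to obtain the corollary as a direct specialization of Proposition~\ref{prop: Kronecker limit as s to 0} to the single-cusp case, where the sums over $k$ collapse to one term and the relations of the preceding Lemma determine the scattering coefficients almost completely. Writing $a=a_{11}$, $b=b_{11}$, $c=c_{11}$ for the coefficients in \eqref{phi exp at s=0} and $\beta=\beta_{11}$, $\gamma=\gamma_{11}$ for those in \eqref{phi exp at s=1}, the first step is to read off from \eqref{sum a_jk}--\eqref{sum with c_jk} the three scalar identities that result when $p_{\Gamma}=1$. From \eqref{sum a_jk} one gets immediately $a=0$. Substituting this into \eqref{sum with b_jk} gives $-b/\vol_{\hyp}(M)=1$, i.e. $b=-\vol_{\hyp}(M)$. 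Finally \eqref{sum with c_jk} becomes $-c/\vol_{\hyp}(M)+b\beta = a\gamma = 0$, so that $-c/\vol_{\hyp}(M)+b\beta=0$.

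The second step is to substitute $j=k=1$ into \eqref{parabolic Kron limit as s to 0} and simplify using these three relations. In the constant term the factor $a$ annihilates the entire bracket containing $\beta_{kk}$ and the logarithm, leaving only $-b/\vol_{\hyp}(M)=1$. In the coefficient of $s$, the term $a_{jk}f_k(z)$ vanishes for the same reason --- this is the pleasant feature that the uncomputed function $f$ drops out entirely --- the combination $-c/\vol_{\hyp}(M)+b\beta$ is zero by the third relation, and what survives is $-(b/\vol_{\hyp}(M))\log\abs{\eta_{\infty}^4(z)\Im(z)}$, which equals $\log(\abs{\eta_{\infty}^4(z)}\Im(z))$ after using $-b/\vol_{\hyp}(M)=1$ and the positivity of $\Im(z)$. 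Assembling the two pieces yields precisely \eqref{KronLimas s to 0}.

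There is no serious analytic obstacle here, since the content is entirely bookkeeping built on top of Proposition~\ref{prop: Kronecker limit as s to 0} and the Lemma. The only point demanding care is tracking which of the three Lemma relations kills which term: one should notice that $a=0$ is responsible both for removing the logarithmic contribution to the constant and for discarding the $f_k(z)$ term, whereas it is the separate vanishing of $-c/\vol_{\hyp}(M)+b\beta$ that eliminates the remaining $z$-independent constant from the linear coefficient. One should also record the harmless identification $\abs{\eta_{\infty}^4(z)\Im(z)}=\abs{\eta_{\infty}^4(z)}\Im(z)$, valid because $\Im(z)>0$, so that the final expression matches the stated form.
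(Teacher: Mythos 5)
Your proposal is correct and follows exactly the route the paper intends: the paper states the corollary as an immediate specialization of the Lemma relations \eqref{sum a_jk}--\eqref{sum with c_jk} and Proposition~\ref{prop: Kronecker limit as s to 0} to the case $p_{\Gamma}=1$, which is precisely the bookkeeping you carry out ($a_{11}=0$, $b_{11}=-\vol_{\hyp}(M)$, $-c_{11}/\vol_{\hyp}(M)+b_{11}\beta_{11}=0$). Your derivation simply makes explicit what the paper leaves to the reader, and all three scalar identities and their use in the substitution are accurate.
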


\vskip .06in
\begin{example}\rm \label{ex: moonshine groups}
In the case when $\Gamma=\overline{\Gamma_0^+(N)}$, for a square-free, positive integer $N$, the quotient space
$X_N:=\overline{\Gamma_0^+(N)} \backslash \h$ has one cusp. The automorphic form $\eta_{\infty}$ is explicitly computed in \cite{JST13}, where it is proved that
\begin{align*}
\eta_{\infty}(z) = \sqrt[2^r]{\prod_{v \mid N}  \eta(vz)}.
\end{align*}
\end{example}

\vskip .06in
\begin{example}\rm \label{ex: congruence subgr}
In the case when $\Gamma$ is the group $\overline{\Gamma_0(N)}$, for a positive integer $N$,
the corresponding quotient space $M_N:=\overline{\Gamma_{0}(N)}\backslash \h$ has many cusps.
Using a standard fundamental domain, $M_{N}$ has cusps at $\infty$, at $0$ and, in the case when $N$ is not prime, at the rational points $1/v$,
where $v \mid N$ is such that $(v, \frac{N}{v}) =1$, where $(\cdot,\, \cdot)$ stands for the greatest common divisor.  As in
the above example, let use the subscript $\infty$ to denote data associated to the cusp at $\infty$.  In particular,
the automorphic form $\eta_{\infty}$ in the example under consideration was explicitly computed in \cite{Vassileva96}, where it is proved that
$$
\eta_{\infty}(z)= \sqrt[\varphi(N)]{\prod_{v \mid N}  \eta(vz) ^{v \mu(N/v)}},
$$
where $\varphi(N)$ is the Euler $\varphi-$function and $\mu$ denotes the M\"obius function.
In the case of other cusps $P_k$, the automorphic form $\eta_{P_k}$ was also computed in \cite{Vassileva96},
but the expressions are more involved so we omit repeating the formulas here.

Also, for the cusp at $\infty$ and the principal congruence subgroup $\Gamma(N)$, the eta-function is computed in Theorem
1, page 405 of \cite{Ta86}.
\end{example}

\vskip .10in
\section{Kronecker's limit formula for elliptic Eisenstein series}

The function $H_{\Gamma}(z,w)$, defined in (\ref{Kronecker_elliptic}) is called
the \textit{elliptic Kronecker limit function at $w$}. It satisfies the transformation rule
\begin{align}\label{H e j transf. rule}
H_{\Gamma}(\gamma z, w) = \eps_{w}(\gamma) (cz + d)^{2C_{w}} H_{\Gamma}(z,w), \text{  for any  } \gamma = \begin{pmatrix} * & * \\ c & d \end{pmatrix} \in \Gamma,
\end{align}
where  $\eps_{w}(\gamma) \in \C$ is a constant of absolute value $1$,
independent of $z$ and
\begin{equation}\label{C_w}
C_w= 2\pi /(\mathrm{ord}(w) \vol_{\hyp}(M)),
\end{equation}
see \cite{vP10}, Proposition 6.1.2., or \cite{vP15}.
Since $H_{\Gamma}(z,w)$, as a function of $z$, is finite and non-zero at the cusp
$P_{1} = \infty$, we may re-scale the function and
assume, without lost of generality, that $H_{\Gamma}(z,w)$ is real at the cusp $\infty$.

\vskip .10in
We begin by studying the asymptotic behavior of $H_{\Gamma}(\sigma_{P_l}z,w)$
as $y=\Im(z) \to\infty$, for $l=1, \ldots, p_{\Gamma}$.

\begin{proposition} \label{prop: behavior of H(z,w)}
For any cusp $P_l$, with $l=1,\ldots,p_{\Gamma}$, let
\begin{equation} \label{B_e_j}
B_{w,P_l}=-C_{w} \left( 2-\log 2 + \log \left| \eta_{P_l}^4(w) \Im(w) \right| - \beta_{ll}\vol_{\hyp}(M)\right).
\end{equation}
Then there exists a constant $a_{w, P_l} \in \C$ of modulus one such that
\begin{equation*}
H(\sigma_{P_l}z, w) = a_{w,P_l} \exp(-B_{w,P_l}) |c_l z + d_l|^{2C_w} + O(\exp(-2\pi \Im(z))),
\text{ as } \Im(z)\rightarrow \infty\,,
\end{equation*}
where $\sigma_{P_l} =\left( \begin{smallmatrix} * & * \\ c_l & d_l \end{smallmatrix}\right)$ is the scaling matrix for a cusp $P_l$ and $C_w$ is defined by \eqref{C_w}.
\end{proposition}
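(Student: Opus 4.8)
The plan is to extract the asymptotics of $H_\Gamma(\sigma_{P_l}z,w)$ in the cusp $P_l$ from the elliptic Kronecker limit formula \eqref{Kronecker_elliptic}, after first fixing the \emph{shape} of the answer from the fact that $H_\Gamma(\cdot,w)$ is a holomorphic modular form. Recall from \eqref{H e j transf. rule}, together with the properties cited from \cite{vP10}, that $H_\Gamma(\cdot,w)$ is holomorphic on $\mathbb H$, transforms with weight $2C_w$ and a unitary multiplier, and vanishes only on the $\Gamma$-orbit of $w$; in particular it is finite and non-vanishing at every cusp, since no $\Gamma$-translate of the interior point $w$ lies in a cusp. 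Hence the function $g_l(z):=(c_lz+d_l)^{-2C_w}H_\Gamma(\sigma_{P_l}z,w)$ is holomorphic, its modulus is invariant under $z\mapsto z+1$, and it is finite and non-zero at $\infty$. Its Fourier expansion therefore has a non-zero constant term, which I write as $a_{w,P_l}\exp(-B_{w,P_l})$ with $\lvert a_{w,P_l}\rvert=1$, the higher modes being $O(\exp(-2\pi\Im z))$. This already produces the stated form $a_{w,P_l}\exp(-B_{w,P_l})\lvert c_lz+d_l\rvert^{2C_w}+O(\exp(-2\pi\Im z))$ and reduces the proposition to the single identity $-B_{w,P_l}=\lim_{\Im z\to\infty}\bigl(\log\lvert H_\Gamma(\sigma_{P_l}z,w)\rvert-2C_w\log\lvert c_lz+d_l\rvert\bigr)$.

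To evaluate this limit I would read off the coefficient of $s$ in \eqref{Kronecker_elliptic} with $z$ replaced by $\sigma_{P_l}z$; by \eqref{Kronecker_elliptic} it equals $-\log\bigl(\lvert H_\Gamma(\sigma_{P_l}z,w)\rvert^{\mathrm{ord}(w)}(\Im\sigma_{P_l}z)^{c}\bigr)$. Substituting $\Im\sigma_{P_l}z=\Im z/\lvert c_lz+d_l\rvert^{2}$ and using $c=\mathrm{ord}(w)C_w$ from \eqref{C_w}, this coefficient plus $c\log\Im z$ tends to $\mathrm{ord}(w)B_{w,P_l}$ as $\Im z\to\infty$, so it suffices to compute the left side of \eqref{Kronecker_elliptic} to the same accuracy. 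I would expand it piece by piece: the prefactor $2^{s}\sqrt{\pi}\,\Gamma(s-\tfrac12)/\Gamma(s)=-2\pi s-2\pi(2-\log 2)s^{2}+O(s^{3})$, which is the source of the constant $2-\log 2$ in \eqref{B_e_j}; the factor $\E^{\mathrm{par}}_{p_k}(w,1-s)$, whose simple pole $-1/(\vol_{\hyp}(M)\,s)$ and constant term $\beta_{kk}-\tfrac1{\vol_{\hyp}(M)}\log\lvert\eta_{P_k}^4(w)\Im w\rvert$ from \eqref{KronLimitPArGen} supply the $\beta_{ll}$ and $\log\lvert\eta_{P_l}^4(w)\Im w\rvert$ terms; and the factor $\E^{\mathrm{par}}_{p_k}(\sigma_{P_l}z,s)$ through its cuspidal constant-term expansion $\delta_{kl}(\Im z)^{s}+\phi_{kl}(s)(\Im z)^{1-s}+O(\exp(-2\pi\Im z))$, with $\phi_{kl}(s)$ expanded as in \eqref{phi exp at s=0}. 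Summing over $k$ and invoking \eqref{sum a_jk} to annihilate the $\Im z\log\Im z$ contributions, the surviving constant coming from the parabolic part is exactly $\mathrm{ord}(w)B_{w,P_l}$ as in \eqref{B_e_j}, while a remainder linear in $\Im z$ is left over.

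The main obstacle is the remaining term $\mathrm{ord}(w)\E^{\mathrm{ell}}_{w}(\sigma_{P_l}z,s)$, since the elliptic Eisenstein series \eqref{ell_eisen} does not decay into the cusp: a direct inspection of \eqref{ell_eisen} for $\mathrm{Re}(s)>1$ shows the orbit terms to be of size $(\Im z)^{-s}$, and after meromorphic continuation the constant term of $\E^{\mathrm{ell}}_{w}(\sigma_{P_l}z,s)$ in the cusp $P_l$ carries growth in $\Im z$ in its coefficient of $s$. What must be proved is that this growth precisely cancels the linear-in-$\Im z$ remainder from the parabolic part and leaves no additional constant, so that the limit is supplied entirely by the parabolic contribution. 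I would establish this by computing the zeroth Fourier coefficient of $\E^{\mathrm{ell}}_{w}(\sigma_{P_l}z,s)$ as $\Im z\to\infty$ — either by unfolding \eqref{ell_eisen} along the cusp $P_l$, or, more economically, by quoting the constant-term computation already carried out in \cite{vP10} in the course of proving \eqref{Kronecker_elliptic}. Since \eqref{Kronecker_elliptic} is an exact identity and the first paragraph shows independently that the limit exists, the cancellation of the linear term is forced, and the only genuine work is to confirm that the constant part of the elliptic $s$-coefficient vanishes. Matching constants then yields \eqref{B_e_j}, while the unit-modulus factor $a_{w,P_l}$ is the phase of the constant Fourier coefficient of $g_l$ combined with the limiting phase of $(c_lz+d_l)^{2C_w}/\lvert c_lz+d_l\rvert^{2C_w}$, which completes the argument.
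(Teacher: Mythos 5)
Your overall strategy---reading off the coefficient of $s$ in \eqref{Kronecker_elliptic} along the cusp $P_l$, expanding the parabolic contribution via \eqref{KronLimitPArGen}, \eqref{phi exp at s=0} and \eqref{sum a_jk}, and controlling the elliptic contribution by the unfolding computation of \cite{vP10}---is essentially the paper's, and your bookkeeping (including the source of $2-\log 2$ in the prefactor and the leftover linear-in-$\Im(z)$ term) is accurate. However, there is one genuine gap, and it sits under everything else: the claim that $H_{\Gamma}(\cdot,w)$ ``is finite and non-vanishing at every cusp, since no $\Gamma$-translate of the interior point $w$ lies in a cusp'' is a non sequitur. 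The zero set of a holomorphic form on $\mathbb{H}$ carries no information about its behavior at a cusp: for $\PSL_2(\Z)$, the form $\Delta(z)$ has no zeros in $\mathbb{H}$ yet vanishes at $\infty$, while $E_4^2E_6/\Delta$ is holomorphic on $\mathbb{H}$, of positive weight, and blows up at $\infty$. What you must exclude is precisely an exponential factor in $\lvert H_{\Gamma}(\sigma_{P_l}z,w)\rvert$, i.e.\ a term $A_{w,P_l}\,\Im(z)$ in $-\log\bigl(\lvert H_{\Gamma}(\sigma_{P_l}z,w)\rvert\,\Im(\sigma_{P_l}z)^{C_w}\bigr)$, equivalently a nonzero cusp parameter in the expansion of your $g_l$; nothing you cite rules this out. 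Note that the paper invokes regularity and non-vanishing only at the cusp $P_1=\infty$ (on the strength of \cite{vP10}, where the base case is the one-cusp group $\PSL_2(\Z)$); at the remaining cusps of a general $\Gamma$ this regularity is part of what Proposition \ref{prop: behavior of H(z,w)} proves, so assuming it is circular. The gap is load-bearing because your third paragraph deduces the cancellation of the linear terms from the existence of the limit, which is exactly what is in question.

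The repair is the paper's own device, and it leaves the rest of your computation intact. Instead of fixing the shape of the answer by cusp regularity, one uses the fact (from \cite{vP10}, generalized in the paper) that $\mathcal{K}_{w}(z)=-\log\bigl(\lvert H_{\Gamma}(z,w)\rvert\,\Im(z)^{C_w}\bigr)$ satisfies a Laplace-type equation away from the orbit of $w$, so the Fourier coefficients of $\mathcal{K}_{w}(\sigma_{P_l}z)$ obey ordinary differential equations in $y=\Im(z)$; hence the zeroth coefficient has the a priori form $-C_w\log y + A_{w,P_l}\,y + B_{w,P_l}$ with $A_{w,P_l}$ not yet known to vanish, and the higher coefficients decay exponentially. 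Your constant-term comparison then yields two facts at once: $A_{w,P_l}=0$, because the elliptic constant term---which by unfolding equals $h_{w}(s)\,\E^{\mathrm{par}}_{P_l}(w,s)\,(\Im z)^{1-s}$ exactly for $\Im z$ large, by the functional equation and the symmetry of $\Phi_M(s)$---cancels the linear part of the parabolic contribution identically; and the value \eqref{B_e_j} of $B_{w,P_l}$. In this organization your observation that only relation \eqref{sum a_jk} is needed survives, but the ``forced cancellation'' becomes an honest identity rather than an appeal to a limit whose existence has not been established.
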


\begin{proof}
The proof closely follows the proof of \cite{vP10},
Proposition 6.2.2. when combined with the Taylor series
expansion \eqref{parabolic Kron limit as s to 0} of the parabolic Eisenstein series at $s=0$.
For the convenience of the reader, we now present the complete argument.

Combining the equation \eqref{Kronecker_elliptic} with
the proof of Proposition 6.1.1 from \cite{vP10}, taking $e_j=w$, we can write
$$
-\log(\vert{H_{\Gamma}(z, w)} \vert \Im(z)^{C_{w}}) = \mathcal{K}_{w} (z),
$$
where the function $\mathcal{K}_{w} (z)$ can be expressed as the sum of two
terms: A term $\F_{w} (z)$ arising from the spectral expansion and
a term $\mathcal{G}_{w} (z)$ which can be expressed as the sum over the
group.  Furthermore, for $z\in\h$ such that $\Im z > \Im (\gamma w)$ for all $\gamma \in \Gamma$
the parabolic Fourier expansion of $\mathcal{K}_{w}(\sigma_{P_l} z)$ is given by
$$
\mathcal{K}_{w}(\sigma_{P_l} z) = \sum_{m\in\Z} b_{m,w,P_l}(y)e(mx)
$$
with coefficients $b_{m,w,P_l}(y)$ given by
$$
b_{m,w,P_l}(y)=\int\limits_{0}^{1}\mathcal{K}_{w}(\sigma_{P_l} z)e(-mx).
$$
Since the hyperbolic Laplacian is $\mathrm{SL}_2-$invariant, we easily generalize computations from p. 128 of \cite{vP10}
to deduce that
$$
\mathcal{K}_{w} (\sigma_{P_l}z) = -C_{w}\log y + A_{w,P_l}y + B_{w,P_l}
+ \sum_{m=1}^\infty (A_{m; w,P_l} e(mz)
+ \overline{A}_{m; w,P_l} e(-m\overline{z})),
$$
for some constants $A_{w, P_l}, B_{w,P_l} \in \R$ and complex constants $A_{m;w,P_l}$.

Let us introduce the notation
\begin{align}\label{f e j}
f_{w,P_l}(z):=\exp\left(-2\sum_{m=1}^\infty A_{m; w,P_l} e(mz) \right),
\end{align}
from which one immediately can write
\begin{align}\label{K e j main formula}
\mathcal{K}_{w} (\sigma_{P_l} z) = A_{w,P_l}y + B_{w,P_l} - \log(\abs{f_{w,P_l}(z)} \Im(z)^{C_{w}}).
\end{align}
When employing \eqref{K e j main formula}, we can re-write \eqref{Kronecker_elliptic} as
\begin{align}\label{Fla for comparison}
\E^{\mathrm{ell}}_{w}(\sigma_{P_l} z,s)
&- h_{w}(s)\sum_{j=1}^{p_{\Gamma}} \E^{\mathrm{par}}_{P_j}(w,1-s)
\E^{\mathrm{par}}_{P_j}(\sigma_{P_l} z,s) = \\ &-C_{w}
+ (A_{w,P_l}y + B_{w,P_l} - \log(\vert{f_{w, P_l}(z)}\vert \Im(z)^{C_{w}})) \cdot s + O(s^2) \nonumber,
\end{align}
as $s \rightarrow 0$, where
\begin{equation} \label{h_w}
h_{w}(s):= \frac{2^s \sqrt{\pi} \,\Gamma(s-1/2)}{\mathrm{ord}(w)\Gamma(s)}.
\end{equation}
As in \cite{vP10}, pp. 129--130, we use the functional
equation of the parabolic Eisenstein series and consider the constant term
in the Fourier series expansion, as a function of $z$, of the function
\begin{equation} \label{const term}
\E^{\mathrm{ell}}_{w}(\sigma_{P_l} z,s)
- h_{w}(s) \sum_{j=1}^{p_{\Gamma}} \E^{\mathrm{par}}_{P_j}(w,1-s)
\E^{\mathrm{par}}_{P_j}(\sigma_{P_l} z,s)=\E^{\mathrm{ell}}_{w}(\sigma_{P_l} z,s)
- h_{w}(s) \sum_{j=1}^{p_{\Gamma}} \E^{\mathrm{par}}_{P_j}(w,s)
\E^{\mathrm{par}}_{P_j}(\sigma_{P_l} z,1-s).
\end{equation}
The constant term is given by
$$
-h_{w}(s)\sum_{j=1}^{p_{\Gamma}} \phi_{jl}(1-s)y^s\E^{\mathrm{par}}_{P_j}(w,s)= -\frac{\sqrt{\pi}}{\mathrm{ord}(w)}
\frac{\Gamma(s-1/2)}{\Gamma(s)}(2y)^s \sum_{j=1}^{p_{\Gamma}} \phi_{jl}(1-s)\E^{\mathrm{par}}_{P_j}(w,s).
$$
Recall the expansions
\begin{equation} \label{gamma s-1/2}
\Gamma(s-1/2)= -2\sqrt{\pi}\left(1+(2-\gamma-2\log 2) s + O(s^2)\right),
\end{equation}
\begin{equation} \label{gamma s}
\frac{1}{\Gamma(s)}= s \left(1+ \gamma s + O(s^2)\right), \text{    and     }(2y)^s= 1+ s\log(2y) + O(s^2),
\end{equation}
which hold when $s\to 0$, where, as usual, $\gamma$ denotes the Euler constant.
When combining these expressions with \eqref{phi exp at s=1},
we can write the asymptotic expansions near $s=0$ of the constant term in the Fourier series expansion of \eqref{const term} as
\begin{equation}\label{F.series const intermediate}
\frac{2\pi}{\mathrm{ord}(w)} \left(1+ (2+\log y -\log 2)s + O(s^2)\right) \cdot \sum_{j=1}^{p_{\Gamma}} \left( -\frac{1}{\vol_{\hyp}(M)} + \beta_{jl}s + O(s^2) \right)\E^{\mathrm{par}}_{P_j}(w,s).
\end{equation}
Let us now compute the first two terms in the Taylor series expansion at $s=0$ of the expression
\begin{equation}\label{F.series const intermediate 2}
\sum_{j=1}^{p_{\Gamma}} \left( -\frac{1}{\vol_{\hyp}(M)}+ \beta_{jl}s + O(s^2) \right)\E^{\mathrm{par}}_{P_j}(w,s).
\end{equation}
By applying \eqref{parabolic Kron limit as s to 0}, we conclude that the constant term in the Taylor series expansion of (\ref{F.series const intermediate 2})
is
$$
\sum_{j=1}^{p_{\Gamma}} \sum_{k=1}^{p_{\Gamma}} \frac{-1}{\vol_{\hyp}(M)} \left( -\frac{b_{jk}}{\vol_{\hyp}(M)} +a_{jk} \beta_{kk}  -\frac{a_{jk}}{\vol_{\hyp}(M)} \log \left| \eta_{P_k}^4(w) \Im(w) \right|\right).
$$
Applying relations \eqref{sum a_jk} and \eqref{sum with b_jk} we then obtain, by manipulation of the sums,
that the constant term in (\ref{F.series const intermediate 2}) is equal to
$\displaystyle -1/\vol_{\hyp}(M)$. The factor multiplying $s$ is equal to
\begin{multline*}
\sum_{j=1}^{p_{\Gamma}} \sum_{k=1}^{p_{\Gamma}} \frac{-1}{\vol_{\hyp}(M)} \left( -\frac{c_{jk}}{\vol_{\hyp}(M)} +b_{jk} \beta_{kk}  -\frac{b_{jk}}{\vol_{\hyp}(M)} \log \left| \eta_{P_k}^4(w) \Im(w) \right| + a_{jk} f_k(w)\right) \\
+\sum_{j=1}^{p_{\Gamma}} \sum_{k=1}^{p_{\Gamma}} \beta_{jl} \left( -\frac{b_{jk}}{\vol_{\hyp}(M)} +a_{jk} \beta_{kk}  -\frac{a_{jk}}{\vol_{\hyp}(M)} \log \left| \eta_{P_k}^4(w) \Im(w) \right|\right).
\end{multline*}
Applying relations \eqref{sum a_jk} to \eqref{sum with c_jk} we get that
$$
\sum_{j=1}^{p_{\Gamma}} \sum_{k=1}^{p_{\Gamma}}a_{jk}f_k(w)=0
$$
and
\begin{align*}
\sum_{k=1}^{p_{\Gamma}}&\left( \frac{-1}{\vol_{\hyp}(M)} \log \left| \eta_{P_k}^4(w) \Im(w) \right| +
\beta_{kk} \right) \sum_{j=1}^{p_{\Gamma}} \left( -\frac{b_{jk}}{\vol_{\hyp}(M)} + a_{jk} \beta_{jl}\right)
\\&= \frac{-1}{\vol_{\hyp}(M)} \log \left| \eta_{P_l}^4(w) \Im(w) \right| + \beta_{ll}
\end{align*}
as well as
$$
\sum_{j=1}^{p_{\Gamma}} \sum_{k=1}^{p_{\Gamma}} \left( \frac{-c_{jk}}{\vol_{\hyp}(M)} +
b_{jk} \beta_{jl}\right)= \sum_{j=1}^{p_{\Gamma}} \sum_{k=1}^{p_{\Gamma}} a_{jk}\gamma_{jl} =0.
$$
Therefore, the factor multiplying $s$ in the Taylor series expansion of \eqref{F.series const intermediate 2} is equal to
$$
\frac{-1}{\vol_{\hyp}(M)} \log \left| \eta_{P_l}^4(w) \Im(w) \right| + \beta_{ll}.
$$
Inserting this into \eqref{F.series const intermediate} we see that the constant term in the Fourier series expansion of \eqref{const term} is given by
$$
-C_w-C_w\left( 2-\log 2 + \log y + \log \left| \eta_{p_l}^4(w) \Im(w) \right| - \beta_{ll}\vol_{\hyp}(M) \right)s + O(s^2),
$$
as $s \to 0$.
Comparing this result with the right-hand side of formula
\eqref{Fla for comparison}, having in mind the definition of the number $C_{w}$, we immediately deduce that $A_{w, P_l}=0$,
$$
B_{w,P_l}=-C_{w} \left( 2-\log 2 + \log \left| \eta_{P_l}^4(w) \Im(w) \right| - \beta_{ll}\vol_{\hyp}(M)\right)
$$
and
$$
\mathcal{K}_{w} (\sigma_{P_l} z) = -\log(\vert{H_{\Gamma}(\sigma_{P_l} z, w)}\vert |c_l z +d_l |^{-2C_w} \Im(z)^{C_{w}})
= B_{w,P_l} - \log(\vert{f_{w,P_l}(z)}\vert \Im(z)^{C_{w}} ),
$$
where the function $f_{w,P_l}$ is defined by \eqref{f e j}.
From \eqref{f e j} we deduce that
$$
\abs{f_{w,P_l}(z)} = \exp \left( -2 \Re \left( \sum_{m=1}^{\infty} A_{m;w, P_l} e(mz) \right)\right) = 1 + O(\exp(-2\pi \Im (z))),
$$
as $\Im (z) \to \infty$.  Therefore,
$$
\abs{H_{\Gamma}(\sigma_{P_l} z, w)} = \exp(-B_{w,P_l})|c_l z +d_l |^{2C_w}  +  O(\exp(-2\pi \Im (z))), \text{  as  } \Im(z) \to \infty\,,
$$
and the proof is complete.
\end{proof}

\vskip .06in
\begin{example}{\bf Moonshine groups.}\rm \label{ex: constants B_N}
Let $N=p_1 \cdot \ldots \cdot p_r$ be a squarefree number. 
Let $X_N= \overline{\Gamma_0(N)^+} \setminus \h$.
The surface $X_N$ possesses one cusp at $\infty$ with identity scaling matrix.
The scattering determinant $\varphi_N$ associated to the only cusp of $X_N$ at $\infty$ is computed in \cite{JST12}, where
it was shown that
$$
\varphi_N(s)=\sqrt{\pi}\frac{\Gamma(s-1/2)}{\Gamma(s)}\frac{\zeta(2s-1)}{\zeta(2s)}\cdot D_N(s),
$$
where $\zeta(s)$ is the Riemann zeta function and
$$
D_N(s)=\prod_{j=1}^r\frac{p_j^{1-s}+1}{p_j^s+1}= \frac{1}{N^{s-1}}\prod_{j=1}^r\frac{p_j^{s-1}+1}{p_j^s+1}.
$$
Let $b_{N}$ denote the constant term in the Laurent series expansion of $\varphi_N(s)$ at $s=1$. One can compute $b_N$ by expanding functions
$D_N(s)$, $\Gamma(s)$ and $\zeta(s)$ in their Laurent expansions at $s=1$, which would yield the expressions
$$
D_N(s)= \frac{2^r}{\sigma(N)}\left(1 + (s-1) \left(\sum_{j=1}^{r} \frac{(1-p_j)\log p_j}{2(p_j+1)} - \log N\right) + O((s-1)^2) \right),
$$
and
\begin{align}
\sqrt{\pi}\frac{\Gamma(s-1/2)}{\Gamma(s)}
&= \pi \left(1-2\log 2 (s-1)
+ O((s-1)^2)\right), \label{gamma exp}
\end{align}
as well as
\begin{align}
\frac{\zeta(2s-1)}{\zeta(2s)}
&= \frac{6}{\pi^2} \left( \frac{1}{2(s-1)} - \log (2\pi) + 1-12\zeta'(-1) + O(s-1)\right).
\label{zeta exp}
\end{align}
Multiplying expansions \eqref{gamma exp} and \eqref{zeta exp} and using that
$$
\frac{1}{\vol_{\hyp} (X_N)} = \frac{3 \cdot 2^r}{\pi \,\sigma(N)},
$$
which was proved in \cite{JST13}, we arrive at the expression
\begin{equation} \label{b_N}
b_N= - \frac{1}{\vol_{\hyp} (X_N) }\left( \sum_{j=1}^{r} \frac{(p_j -1)\log p_j}{2(p_j+1)}- \log N + 2\log (4\pi) + 24\zeta'(-1) - 2\right).
\end{equation}
With this formula, Proposition \ref{prop: behavior of H(z,w)}, and Example \ref{ex: moonshine groups} we conclude that the elliptic Kronecker limit function
$H_N(z,w) := H_{\overline{\Gamma_0^+(N)}} (z,w)$ associated to the point $w \in X_N$ may we written as
$$
H_N(z,w)= a_{N,w}\exp(-B_{N,w}) + \exp(-2\pi\Im (z)), \text{   as   } \Im (z) \to \infty,
$$
where $a_{N,w}$ is a complex constant of modulus one and
\begin{align*}
B_{N,w} &= - \frac{2\pi}{\mathrm{ord}(w)\vol_{\hyp} (X_N)}\left( \sum_{j=1}^{r} \frac{(p_j -1)\log p_j}{2(p_j+1)}-\log N
+ C+ \log \left(\sqrt[2^r]{\prod_{v \mid N}
\abs{\eta(v w)}^4} \cdot \Im (w)\right) \right)\notag
\end{align*}
with $C:=\log (8\pi^2) + 24\zeta'(-1)$.
\end{example}

\vskip .06in
\begin{example}{\bf Congruence subgroups of prime level.} \rm
Let $M_p= \overline{\Gamma_0(p)}\setminus \h$, where $p$ is a prime. The surface $M_p$ has two cusps,
at $\infty$ and $0$. The scaling matrix for the cusp at $\infty$ is identity matrix. The scattering matrix in
this setting is computed in \cite{He83} and is given by
$$
\Phi_{M_p}(s)= \sqrt{\pi} \frac{\Gamma(s-1/2)}{\Gamma(s)} \frac{\zeta(2s-1)}{\zeta(2s)} \cdot \frac{1}{p^{2s}-1} \left(
                           \begin{array}{cc}
                             p-1 & p^s-p^{1-s} \\
                             p^s-p^{1-s} & p-1 \\
                           \end{array}
                         \right).
$$
Using the expansions \eqref{gamma exp} and \eqref{zeta exp}, together with $\vol_{\hyp}(M_p)=\pi(p+1)/3$
and the expansion
$$
\frac{p-1}{p^{2s}-1}= \frac{1}{p+1}-\frac{2p^2 \log p}{(p-1)(p+1)^2} (s-1) + O((s-1)^2) \text{  as  } s \to 1\,,
$$
we conclude that the
coefficients $\beta_{11}$ and $\beta_{22}$ in the Laurent series
expansion \eqref{phi exp at s=1} are given by
$$
\beta_{11}=\beta_{22}= -\frac{2}{\vol_{\hyp} (M_p)}\left( \log (4\pi p) + 12\zeta'(-1) -1 + \frac{\log p}{p^2 -1} \right).
$$
Therefore, from Proposition \ref{prop: behavior of H(z,w)}, when applied to the cusp at $\infty$,
and Example \ref{ex: congruence subgr},
we conclude that the elliptic Kronecker limit function $\widetilde{H}_p(z,w) := H_{\overline{\Gamma_0(p)}} (z,w)$ associated to the point $w \in M_p$
can be written as
$$
\widetilde{H}_p(z,w)= \widetilde{a}_{p,w}\exp(-\widetilde{B}_{p,w}) + \exp(-2\pi\Im (z)), \text{   as   } \Im (z) \to \infty,
$$
where $\widetilde{a}_{p,w}$ is a complex constant of modulus one and
\begin{align*}
\widetilde{B}_{p,w} &= - \frac{2\pi}{\mathrm{ord}(w)\vol_{\hyp} (M_p)}\left( \frac{2 p^2 \log p }{p^2-1}
+C
+\log \left(\abs{\sqrt[p-1]{\frac{\eta(p w) ^p}{\eta(w)}} \cdot \Im (w)}\right) \right)
\end{align*}
with $C:=\log (8\pi^2) + 24\zeta'(-1)$.
\end{example}

\vskip .10in


%

\section{A factorization theorem}

In (\ref{elliptic_at_i}) and (\ref{elliptic_at_rho}) one has an evaluation of the elliptic Kronecker limit function
in the special case when $\Gamma = \mathrm{PSL}_2(\mathbb{Z})$
and $w=i$ or $w=\rho= \exp(2\pi i /3)$ are the elliptic fixed points of $\mathrm{PSL}_2(\mathbb{Z})$.  The following
theorem generalizes these results.

\begin{theorem} \label{thm: factorization}
Let $M = \Gamma\setminus \h$ be a finite volume Riemann surface with at least one cusp, which we assume to be at $\infty$ with identity scaling matrix.
Let $k$ be a fixed positive integer such that there exists a weight $2k$ holomorphic form $f_{2k}$ on $M$ which is non-vanishing in all cusps and with
$q-$expansion at $\infty$ given by
\begin{equation} \label{q exp. of f_2k}
f_{2k}(z)= b_{f_{2k}} + \sum_{n=1}^{\infty}b_{f_{2k}}(n)q_z^n.
\end{equation}
Let $Z(f_{2k})$ denote the set of all zeros $f_{2k}$ counted according to their multiplicities and let us define the function
$$
H_{f_{2k}}(z):= \prod_{w \in Z(f_{2k})} H_{\Gamma}(z,w),
$$
where, as above, $H_{\Gamma}(z,w)$ is the elliptic Kronecker limit function.  Then there exists a complex constant $c_{f_{2k}}$ such that
\begin{equation} \label{factorization fla}
f_{2k}(z) = c_{f_{2k}}H_{f_{2k}}(z)
\end{equation}
and
$$
\abs{c_{f_{2k}}} =\abs{b_{f_{2k}}} \exp \left( \sum_{w\in Z(f_{2k})} B_{w, \infty} \right ),
$$
where  $B_{w,\infty}$ is defined in \eqref{B_e_j}.
\end{theorem}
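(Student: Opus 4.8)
The plan is to prove that $f_{2k}$ and $H_{f_{2k}}$ have identical divisors and identical automorphy factors, so that their quotient is a constant, and then to evaluate that constant in the cusp at $\infty$. First I would record the two properties of the factors $H_{\Gamma}(\cdot,w)$ that make this work. By the discussion following \eqref{Kronecker_elliptic}, $H_{\Gamma}(z,w)$ is holomorphic in $z$ on $\mathbb{H}$ and vanishes exactly on the orbit $\Gamma w$; examining the coefficient of $s$ in \eqref{Kronecker_elliptic} near $z=w$, where the identity coset of the elliptic series contributes $-\mathrm{ord}(w)\log\sinh d_{\mathrm{hyp}}(z,w)\sim-\mathrm{ord}(w)\log|z-w|$ and is matched against the term $-\mathrm{ord}(w)\log|H_{\Gamma}(z,w)|$, shows that the zero of $H_{\Gamma}(z,w)$ at $z=w$ is simple, whether or not $w$ is an elliptic fixed point. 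Hence $H_{f_{2k}}(z)=\prod_{w\in Z(f_{2k})}H_{\Gamma}(z,w)$ is holomorphic on $\mathbb{H}$ with divisor $\sum_{w\in Z(f_{2k})}[w]$, which is exactly the divisor of $f_{2k}$. Since $f_{2k}$ is non-vanishing in the cusps and, by Proposition \ref{prop: behavior of H(z,w)}, each $H_{\Gamma}(\cdot,w)$ tends to a finite non-zero limit in every cusp, the two functions share the same divisor on all of $M$.

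Next I would compare automorphy factors. Applying \eqref{H e j transf. rule} factor by factor gives
$$
H_{f_{2k}}(\gamma z)=\Big(\prod_{w\in Z(f_{2k})}\eps_{w}(\gamma)\Big)(cz+d)^{2\sum_{w}C_w}\,H_{f_{2k}}(z),
$$
where $|\prod_w\eps_w(\gamma)|=1$, while $f_{2k}(\gamma z)=(cz+d)^{2k}f_{2k}(z)$. The crucial identity is therefore $\sum_{w\in Z(f_{2k})}C_w=k$. By \eqref{C_w} this is equivalent to $\frac{2\pi}{\vol_{\hyp}(M)}\sum_{w}\mathrm{ord}(w)^{-1}=k$, i.e.\ to the valence formula $\sum_{P}\mathrm{ord}_P(f_{2k})/\mathrm{ord}(P)=k\,\vol_{\hyp}(M)/(2\pi)$ for a weight $2k$ holomorphic form, the factor $\mathrm{ord}(w)^{-1}$ in $C_w$ furnishing precisely the ramification weights at the elliptic fixed points and the non-vanishing hypothesis removing the cuspidal contribution. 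Granting this, $g(z):=f_{2k}(z)/H_{f_{2k}}(z)$ is holomorphic and non-vanishing on $\mathbb{H}$ with $|g(\gamma z)|=|g(z)|$, so $\log|g|$ descends to a harmonic function on $M$ that is bounded on the compact part and extends continuously and without zeros to each cusp. A bounded harmonic function on the compactified surface is constant, and a holomorphic function of constant modulus is constant, so $g\equiv c_{f_{2k}}$, which gives \eqref{factorization fla}.

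Finally I would determine $|c_{f_{2k}}|$ by letting $\Im(z)\to\infty$. Because the cusp at $\infty$ carries the identity scaling matrix, Proposition \ref{prop: behavior of H(z,w)} yields $H_{\Gamma}(z,w)\to a_{w,\infty}\exp(-B_{w,\infty})$ with $|a_{w,\infty}|=1$, hence $H_{f_{2k}}(z)\to(\prod_w a_{w,\infty})\exp(-\sum_w B_{w,\infty})$, whereas \eqref{q exp. of f_2k} gives $f_{2k}(z)\to b_{f_{2k}}$. Since $g$ is constant, $c_{f_{2k}}$ equals the quotient of these limits; taking absolute values makes the unit-modulus factors disappear and produces $|c_{f_{2k}}|=|b_{f_{2k}}|\exp(\sum_{w\in Z(f_{2k})}B_{w,\infty})$. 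I expect the main obstacle to be the verification of $\sum_w C_w=k$ for a general Fuchsian group of the first kind, namely the valence formula in the orbifold setting together with the confirmation that $H_{\Gamma}(\cdot,w)$ has a simple zero even at an elliptic fixed point; this is exactly where the orbifold structure and the normalization $\mathrm{ord}(w)^{-1}$ built into $C_w$ must match up correctly.
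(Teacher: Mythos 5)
Your proposal is correct, and its skeleton matches the paper's: show $H_{f_{2k}}$ and $f_{2k}$ have the same zeros, use the valence formula \eqref{zeros f-la} (Proposition 7 of \cite{SCM66}) to get $\sum_{w} C_w = k$ so that $H_{f_{2k}}$ transforms with weight $2k$, conclude the quotient is constant, and evaluate the constant as $\Im(z)\to\infty$ using Proposition \ref{prop: behavior of H(z,w)}. Where you genuinely diverge is the constancy step, which is the analytic heart of the proof. The paper shows $\log\vert H_{f_{2k}}/f_{2k}\vert$ is harmonic and $L^{2}$ on $M$, invokes the spectral expansion, and uses integration by parts to prove orthogonality to every Laplacian eigenfunction and to the continuous spectrum, forcing the function to be constant; you instead note that $\log\vert g\vert$ is bounded near each cusp, extends harmonically across the punctures, and apply the maximum principle on the compactified surface. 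Your route is more elementary and avoids spectral theory entirely; the paper's route is more forgiving at the cusps, since it only needs polynomial growth of the quotient (making $\log\vert F_{f_{2k}}\vert$ square-integrable), whereas your argument needs genuine boundedness, i.e.\ the exact cancellation of the growth factors, which indeed holds once $\sum_w C_w = k$ is known. You also supply a detail the paper leaves implicit: that $H_{\Gamma}(\cdot,w)$ vanishes to exactly first order at $w$ even when $w$ is elliptic, extracted by matching the logarithmic singularity $-\mathrm{ord}(w)\log\vert z-w\vert$ of the $s$-coefficient in \eqref{Kronecker_elliptic}; this is what makes the quotient non-vanishing rather than merely zero off $Z(f_{2k})$. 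One imprecision to repair: Proposition \ref{prop: behavior of H(z,w)} does not say $H_{\Gamma}(\cdot,w)$ tends to a finite non-zero limit in \emph{every} cusp — at a cusp $P_l\neq\infty$ it grows like $\vert c_l z + d_l\vert^{2C_w}$ — so you should phrase this as non-vanishing at the cusps in the automorphic sense, with the growth factors cancelling against $(c_l z+d_l)^{2k}$ in the quotient only after $\sum_w C_w = k$ is established.
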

\begin{proof}
Assume that $f_{2k}$ possesses $m+l\geq 1$ zeroes on $M$, where $m$ zeros are at the elliptic points
$e_j$ of $M$, $j=1,\ldots,m$,
and $l$ zeroes are at the non-elliptic points $w_i \in M$; of course, all zeroes are counted with multiplicities.
Then $H_{f_{2k}}(z)$ is a holomorphic function on $M$ which is vanishing if and only if $z \in Z(f_{2k})$ and which according to \eqref{H e j transf. rule}
satisfies the transformation rule
$$
H_{f_{2k}}(\gamma z) = \eps_{f_{2k}}(\gamma)(cz+d)^{C_{f_{2k}}} H_{f_{2k}}(z), \text{  for any  } \gamma = \begin{pmatrix} * & * \\ c & d \end{pmatrix} \in \Gamma,
$$
where $\eps_{f_{2k}}(\gamma)$ is a constant of modulus one and
$$
C_{f_{2k}} = \frac{4\pi}{\vol_{\hyp} (M)} \left(\sum_{j=1}^{m} \frac{1}{n_{e_j}} + l \right).
$$

The classical Riemann-Roch theorem relates the number of zeros of a holomorphic form to its weight and the genus of
$M$ in the case $M$ is smooth and compact.  A generalization of the relation follows from Proposition 7, page II-7,
of \cite{SCM66} which, in the case under consideration, yields the formula
\begin{align} \label{zeros f-la}
k \cdot \frac{\vol_{\hyp}(M)}{2\pi}= \sum_{e \in \mathcal{E}_N} \frac{1}{n_e} v_{e}(f) + \sum_{z\in M \setminus \mathcal{E}_N} v_{z}(f),
\end{align}
where $\mathcal{E}_N$ denotes the set of elliptic points in $M$, $n_e$ is the order of the elliptic point $e\in \mathcal{E}_N$ and $v_z(f)$ denotes the order of the zero $z$ of $f$.

Since $Z(f_{2k})$ is the set of all vanishing points of $f_{2k}$, formula \eqref{zeros f-la} implies that
$$
2k \cdot \frac{\vol_{\hyp} (M)}{4\pi} = \sum_{j=1}^{m} \frac{1}{n_{e_j}} + l,
$$
hence $C_{f_{2k}} = 2k$. In other words, $H_{f_{2k}}(z)$ is a holomorphic function on $M$, vanishing if
and only if $z \in Z(f_{2k})$ and satisfying transformation rule
$$
H_{f_{2k}}(\gamma z) = \eps_{f_{2k}}(\gamma)(cz+d)^{2k} H_{f_{2k}}(z), \text{  for any  } \gamma = \begin{pmatrix} * & * \\ c & d \end{pmatrix} \in \Gamma.
$$
By Proposition \ref{prop: behavior of H(z,w)}, we have that for any $w \in Z(f_{2k})$ and any cusp $P_l$ of $M$, with $l=1,\ldots,p_{\Gamma}$,
the function
$$
F_{f_{2k}}(z):= \frac{H_{f_{2k}}(z)}{f_{2k}(z)}
$$
is a non-vanishing holomorphic function on $M$, bounded and non-zero at the cusp at $\infty$ and has at most polynomial growth in any other cusp of $M$.
Therefore, the function $\log \vert F_{f_{2k}}(z)\vert$ is harmonic on $M$ whose growth in any cusp is such that $\log \vert F_{f_{2k}}(z)\vert$
is $L^{2}$ on $M$.  As a result, $\log \vert F_{f_{2k}}(z)\vert$ admits a spectral expansion; see \cite{He83} or \cite{Iwa02}.  Since $\log \vert F_{f_{2k}}(z)\vert$
is harmonic, one can use integration by parts to show that $\log \vert F_{f_{2k}}(z)\vert$ is orthogonal to any eigenfunction of the Laplacian.
Therefore, from the spectral expansion, one concludes that $\log \vert F_{f_{2k}}(z)\vert$ is constant, hence so is $F_{f_{2k}}(z)$.  The
evaluation of the constant is obtained by considering the limiting behavior as $z$ approaches $\infty$. With all this, the proof of
\eqref{factorization fla} is complete.
\end{proof}

\section{Examples of factorization}

\subsection{An arbitrary surface with one cusp}

In the case when a surface $M$ has one cusp, we get the following special case of Theorem \ref{thm: factorization}.

\begin{corollary} \label{cor:factorization, one cusp}
Let $M = \Gamma \setminus \h$ be a finite volume Riemann surface with one cusp,
which we assume to be at $\infty$ with identity scaling matrix. Then the weight $2k$ holomorphic Eisenstein series
$E_{2k, \Gamma}$ defined in \eqref{E_2k, Gamma}
can be represented as
$$
E_{2k, \Gamma}(z) = a_{E_{2k, \Gamma}} B_{E_{2k, \Gamma}}\prod_{w \in Z(E_{2k, \Gamma})}H_{\Gamma}(z,w),
$$
where $a_{E_{2k, \Gamma}}$ is a complex constant of modulus one and
$$
B_{E_{2k, \Gamma}} = \prod_{w \in Z(E_{2k, \Gamma})} \exp \left(C_w \left(\log 2 -2 + \beta_M \vol_{\hyp}(M) \right)\right) \cdot \left|\eta_{\infty}^4(w) \Im (w) \right|^{-C_w} .
$$
As before, $\eta_{\infty}$ is the parabolic Kronecker limit function defined in section \ref{sec: Kron limir parabolic}, formula \eqref{KronLimitPArGen}, and
$\beta_M$ is the constant term in the Laurent series expansion of the scattering determinant on $M$.
\end{corollary}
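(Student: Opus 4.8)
The plan is to obtain Corollary \ref{cor:factorization, one cusp} as a direct specialization of the factorization theorem, Theorem \ref{thm: factorization}, applied to the choice $f_{2k} = E_{2k,\Gamma}$. First I would verify the hypotheses of that theorem. As recorded immediately after \eqref{E_2k, Gamma}, for $k\geq 2$ the series $E_{2k,\Gamma}$ is a weight $2k$ holomorphic modular form which is bounded and non-vanishing in every cusp, and its expansion at $\infty$ reads $E_{2k,\Gamma}(z) = 1 + O(\exp(-2\pi\Im(z)))$ as $\Im(z)\to\infty$. In the notation \eqref{q exp. of f_2k} this says precisely that the leading coefficient is $b_{f_{2k}} = 1$, hence $|b_{f_{2k}}| = 1$. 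Theorem \ref{thm: factorization} therefore provides a complex constant $c_{E_{2k,\Gamma}}$ with
\[
E_{2k,\Gamma}(z) = c_{E_{2k,\Gamma}} \prod_{w\in Z(E_{2k,\Gamma})} H_\Gamma(z,w), \qquad |c_{E_{2k,\Gamma}}| = \exp\left(\sum_{w\in Z(E_{2k,\Gamma})} B_{w,\infty}\right).
\]

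The second step is to specialize the constant $B_{w,\infty}$ to the one-cusp setting. Since $M$ has a single cusp, normalized at $\infty$ with identity scaling matrix, the scattering matrix reduces to the scalar $\phi_{11}$, which is itself the scattering determinant; thus the constant term $\beta_{11}$ in its Laurent expansion \eqref{phi exp at s=1} at $s=1$ coincides with the quantity $\beta_M$ appearing in the statement. Substituting $l=1$, $P_1=\infty$ and $\beta_{11}=\beta_M$ into the formula \eqref{B_e_j} for $B_{w,P_l}$, and recalling \eqref{C_w}, I obtain
\[
B_{w,\infty} = -C_w\left(2-\log 2 + \log\left|\eta_\infty^4(w)\Im(w)\right| - \beta_M \vol_{\hyp}(M)\right).
\]

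Exponentiating and peeling off the term $-C_w\log|\eta_\infty^4(w)\Im(w)|$ as the factor $|\eta_\infty^4(w)\Im(w)|^{-C_w}$ gives
\[
\exp(B_{w,\infty}) = \exp\left(C_w\left(\log 2 - 2 + \beta_M\vol_{\hyp}(M)\right)\right)\cdot\left|\eta_\infty^4(w)\Im(w)\right|^{-C_w},
\]
so that taking the product over $w\in Z(E_{2k,\Gamma})$ identifies $|c_{E_{2k,\Gamma}}|$ with the number $B_{E_{2k,\Gamma}}$ displayed in the corollary. Finally, since $B_{E_{2k,\Gamma}}$ is a product of positive reals and hence real and positive, the phase $a_{E_{2k,\Gamma}} := c_{E_{2k,\Gamma}}/B_{E_{2k,\Gamma}}$ is a complex number of modulus one; writing $c_{E_{2k,\Gamma}} = a_{E_{2k,\Gamma}}B_{E_{2k,\Gamma}}$ then yields the asserted representation.

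I expect no genuine analytic obstacle here, as the substance of the argument is already carried by Theorem \ref{thm: factorization} and Proposition \ref{prop: behavior of H(z,w)}. The only points requiring care are the two bookkeeping verifications: confirming that $E_{2k,\Gamma}$ satisfies the theorem's hypotheses (non-vanishing in all cusps and leading $q$-coefficient equal to $1$), and correctly identifying the one-dimensional scattering-determinant constant $\beta_M$ with $\beta_{11}$ so that the exponential \eqref{B_e_j} collapses into the stated product $B_{E_{2k,\Gamma}}$.
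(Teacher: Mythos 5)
Your proposal is correct and follows exactly the route the paper intends: the corollary is stated as an immediate specialization of Theorem \ref{thm: factorization} to $f_{2k}=E_{2k,\Gamma}$ (with $b_{f_{2k}}=1$ from the expansion noted after \eqref{E_2k, Gamma}), combined with formula \eqref{B_e_j} where the one-cusp scattering scalar gives $\beta_{11}=\beta_M$. Your bookkeeping of the exponentials matches the stated constant $B_{E_{2k,\Gamma}}$, so nothing is missing.
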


In this case, due to a very simple form of the Kronecker limit formula for parabolic Eisenstein
series as $s\to 0$, the factorization theorem yields an interesting form of the Kronecker limit formula
for elliptic Eisenstein series, which we state as the following proposition.
\begin{proposition} \label{prop:Ell Kron limit one cusp}
Let $M = \Gamma \setminus \h$ be a finite volume Riemann surface with one cusp,
which we assume to be at $\infty$ with identity scaling matrix. Let $k$ be a fixed positive integer such that there exists a weight $2k$ holomorphic form $f_{2k}$ on $M$ with $q-$expansion at $\infty$ given by \eqref{q exp. of f_2k}. Then
\begin{equation} \label{ell kroneck limit one cusp}
\sum_{w\in Z(f_{2k})} \mathcal{E}^{\mathrm{ell}}_{w}(z,s)=
-s\log\left( |f_{2k}(z)| |\eta_{\infty} ^4(z)|^{-k}\right) + s\log|b_{f_{2k}}| + O(s^2)
\end{equation}
as $s\to 0$, where  $Z(f_{2k})$ denotes the set of all zeros of $f_{2k}$ counted with multiplicities.
\end{proposition}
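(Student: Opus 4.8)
The plan is to isolate a single-point elliptic Kronecker limit formula near $s=0$ and then to sum it over $w \in Z(f_{2k})$, using Theorem \ref{thm: factorization} and the Riemann--Roch relation \eqref{zeros f-la} to collapse the answer. Since $M$ has a single cusp at $\infty$, the sum over cusps in \eqref{Kronecker_elliptic} reduces to one term, and that identity reads
\[
\mathrm{ord}(w)\,\mathcal{E}^{\mathrm{ell}}_{w}(z,s) - \frac{2^{s}\sqrt{\pi}\,\Gamma(s-\tfrac12)}{\Gamma(s)}\,\mathcal{E}^{\mathrm{par}}_{\infty}(w,1-s)\,\mathcal{E}^{\mathrm{par}}_{\infty}(z,s) = -c - \log\bigl(|H_{\Gamma}(z,w)|^{\mathrm{ord}(w)}(\Im z)^{c}\bigr)s + O(s^{2}),
\]
with $c=2\pi/\vol_{\hyp}(M)$.

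First I would expand the parabolic product on the left near $s=0$. The gamma quotient is controlled by \eqref{gamma s-1/2} and \eqref{gamma s} (together with $2^{s}=1+s\log 2+O(s^{2})$) and has a simple zero at $s=0$; the factor $\mathcal{E}^{\mathrm{par}}_{\infty}(w,1-s)$ is read off from \eqref{KronLimitPArGen} with argument $1-s$, so its simple pole at $s=1$ becomes a $-1/(\vol_{\hyp}(M)\,s)$ term; and $\mathcal{E}^{\mathrm{par}}_{\infty}(z,s)$ is given by the one-cusp formula \eqref{KronLimas s to 0}. The zero cancels the pole, so the product is holomorphic at $s=0$ with constant term exactly $c$, and, after comparing the coefficient of $s$ with the definition \eqref{B_e_j} of $B_{w,\infty}$ (using $\mathrm{ord}(w)\,C_{w}=c$ from \eqref{C_w}), I expect to arrive at the single-point statement
\[
\mathcal{E}^{\mathrm{ell}}_{w}(z,s) = \bigl(C_{w}\log|\eta_{\infty}^{4}(z)| - B_{w,\infty} - \log|H_{\Gamma}(z,w)|\bigr)s + O(s^{2}), \qquad s\to 0.
\]
This is the $s\to 0$ analogue of the constant-Fourier-coefficient computation already performed in the proof of Proposition \ref{prop: behavior of H(z,w)}, now keeping the full $z$-dependence through $\log|\eta_{\infty}^{4}(z)|$ instead of only the constant term.

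Next I would sum over $w\in Z(f_{2k})$, where three ingredients make the sum collapse. By definition $\sum_{w}\log|H_{\Gamma}(z,w)| = \log|H_{f_{2k}}(z)|$; the Riemann--Roch relation \eqref{zeros f-la} used in Theorem \ref{thm: factorization} gives $\sum_{w}C_{w}=\frac{2\pi}{\vol_{\hyp}(M)}\sum_{w}\frac{1}{\mathrm{ord}(w)}=k$, so the coefficient of $\log|\eta_{\infty}^{4}(z)|$ becomes exactly $k$; and Theorem \ref{thm: factorization} provides $f_{2k}=c_{f_{2k}}H_{f_{2k}}$ with $|c_{f_{2k}}|=|b_{f_{2k}}|\exp(\sum_{w}B_{w,\infty})$, whence $\log|H_{f_{2k}}(z)|=\log|f_{2k}(z)|-\log|b_{f_{2k}}|-\sum_{w}B_{w,\infty}$. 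Substituting these, the terms $\sum_{w}B_{w,\infty}$ cancel and the surviving coefficient of $s$ is $k\log|\eta_{\infty}^{4}(z)|-\log|f_{2k}(z)|+\log|b_{f_{2k}}|$, which is precisely $-\log\bigl(|f_{2k}(z)||\eta_{\infty}^{4}(z)|^{-k}\bigr)+\log|b_{f_{2k}}|$, as required.

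The hard part will be the $s\to 0$ bookkeeping in the parabolic product: tracking the pole/zero cancellation and checking that the constant term reassembles to $c$ while the coefficient of $s$ reassembles exactly to $-B_{w,\infty}$ as in \eqref{B_e_j}. Once the single-point formula is secured, the passage to the sum is purely algebraic, and it is the one-cusp hypothesis that supplies the clean expansion \eqref{KronLimas s to 0} driving every cancellation.
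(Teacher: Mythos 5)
Your proposal is correct and follows essentially the same route as the paper's proof: both expand the parabolic term of \eqref{Kronecker_elliptic} near $s=0$ using \eqref{gamma s-1/2}, \eqref{gamma s}, \eqref{KronLimitPArGen} and the one-cusp formula \eqref{KronLimas s to 0}, then invoke Theorem \ref{thm: factorization} to rewrite $\sum_{w}\log|H_{\Gamma}(z,w)|$ and the relation $\sum_{w\in Z(f_{2k})} C_{w}=k$ from \eqref{zeros f-la} to collapse the sum. The only cosmetic difference is that you expand the single-point identity before summing over $Z(f_{2k})$, whereas the paper divides by $\mathrm{ord}(w)$, sums first, and then expands; the cancellations and bookkeeping are identical.
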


\begin{proof}
We start with formula \eqref{Kronecker_elliptic}, which we divide by $\mathrm{ord}(w)$, and take the sum over all $w \in Z(f_{2k})$ to get
\begin{align} \label{Kronecker limit ell 2}
\sum_{w\in Z(f_{2k})} \mathcal{E}^{\mathrm{ell}}_{w}(z,s)&- \mathcal{E}^{\mathrm{par}}_{\infty}(z,s) \sum_{w\in Z(f_{2k})} h_w(s)
\mathcal{E}^{\mathrm{par}}_{\infty}(w,1-s) = \notag \\
&-\sum_{w\in Z(f_{2k})} C_w \left( 1 + s \log(\Im z)\right) - \log\left( \prod_{w\in Z(f_{2k})} |H_{\Gamma}(z,w)| \right)\cdot s
+O(s^2)
\end{align}
as $s\to 0$, where $C_w$ and $h_w$ are defined by \eqref{C_w} and \eqref{h_w} respectively.
One now expands the second term on the left hand side of \eqref{Kronecker limit ell 2} into a Taylor series at $s=0$ by applying formulas \eqref{gamma s-1/2}, \eqref{gamma s},  \eqref{KronLimas s to 0} and \eqref{KronLimitPArGen}. After multiplication, we get, as $s \to 0$, the expression
\begin{multline} \label{parabolic sum}
 \mathcal{E}^{\mathrm{par}}_{\infty}(z,s) \sum_{w\in Z(f_{2k})} h_w(s) \mathcal{E}^{\mathrm{par}}_{\infty}(w,1-s)
 \\ = \sum_{w\in Z(f_{2k})} C_w \left( 1 + s  \left[2-\log2 - \beta_M \vol(M) + \log |\eta_{\infty}^4(w) \Im (w) | +
 |\eta_{\infty}^4(z) \Im (z) | \right]\right)+O(s^2)
\end{multline}
as $s \to 0$.  Theorem \ref{thm: factorization} yields that
\begin{equation} \label{log of prod}
\log\left( \prod_{w\in Z(f_{2k})} |H_{\Gamma}(z,w)| \right) = \log |f_{2k}(z)| - \sum_{w\in Z(f_{2k})} B_{w,\infty} - \log |b_{f_{2k}}|,
\end{equation}
where $B_{w,\infty}$ is defined by \eqref{B_e_j} for the cusp $P_l=\infty$. Finally, from formula \eqref{zeros f-la}, we get that
$$
\sum_{w\in Z(f_{2k})} C_w =k.
$$
Therefore, by inserting \eqref{B_e_j},  \eqref{log of prod} and \eqref{parabolic sum} and  into \eqref{Kronecker limit ell 2}, we immediately deduce
\eqref{ell kroneck limit one cusp}. The proof is complete.
\end{proof}

\begin{remark}\rm
In the case $\Gamma=\mathrm{PSL}_2(\Z)$, the parabolic Kronecker limit function is given by
$\eta_{\infty}(z)= \eta(z)=\Delta(z)^{1/24}$.
Then, for $k=3$ and $f_{2k}=E_6$, we have $b_{E_6}=1$ and $Z(E_6)= \{i\}$, hence
Proposition \ref{prop:Ell Kron limit one cusp} yields \eqref{elliptic Eis_at_i}. Analoguously,
for $k=2$ and $f_{2k}=E_4$, we have $b_{E_4}=1$ and $Z(E_4)= \{\rho\}$, and Proposition \ref{prop:Ell Kron limit one cusp}
gives \eqref{elliptic Eis_at_rho}. Furthermore (see \cite{vP10}, p.~131),
we have $B_{E_6,\Gamma}=\exp(B_i)$ and $B_{E_4,\Gamma}=\exp(B_{\rho})$, where $B_i$ and $B_{\rho}$ are given by \eqref{elliptic_at_i}
and \eqref{elliptic_at_rho} respectively.
\end{remark}

Let us now develop further examples of a surfaces with one cusp and explicitly compute the
constant $B_{E_{2k, \Gamma}}$ in these special cases.

\subsection{Moonshine groups of square-free level}

\begin{example}\rm
Consider the surface $X_2$.  There exists one elliptic point of order two, $e_1=i/\sqrt{2}$, and one elliptic point
of order four, $e_2=1/2 + i/2$.  The surface $X_2$ has genus zero and one cusp, hence $\vol_{\hyp}(X_2)=\pi/2$.
The transformation rule for $E_6^{(2)}$ implies that the form must vanish at the points $e_1$ and $e_2$.  Furthermore, formula \eqref{zeros f-la}
when applied to $X_{2}$ becomes
\begin{align} \label{zeros f-la N=2}
\frac{2k}{8}= v_{\infty}(f) + \frac{1}{4}v_{e_2}(f)+ \frac{1}{2} v_{e_1}(f) + \sum_{z\in X_2 \setminus \{e_1,e_2\}} v_{z}(f).
\end{align}
Taking $k=3$, we conclude that $e_1$ and $e_2$ are the only vanishing points of $E_6^{(2)}$ and the order of vanishing
is one at each point.  Therefore, in the notation of Theorem \ref{thm: factorization} and Example \ref{ex: constants B_N}, we have that the form $H_6^{(2)}(z)= H_{E_6^{(2)}}(z)$ is given by
$H_6^{(2)} (z) := H_2(z, e_1)H_2(z, e_2)$. Assuming that the phase of $H_6^{(2)} (z)$ is such that it attains real values at the cusp $\infty$, we have that
\begin{equation} \label{E 6,2}
E_6^{(2)}(z) = C_{2,6} H_6^{(2)} (z),
\end{equation}
where the absolute value of the constant $C_{2,6}$ is given by
$|C_{2,6}|=e^{B_{2,e_1}+B_{2,e_2}}$ with
$$
B_{2,e_1}=-2 \left( 24\zeta'(-1) + \log(8\pi^2)
- \frac{4}{3} \log 2 +\frac{1}{12} \log\left( \left| \Delta(i\sqrt{2}) \cdot \Delta(i/\sqrt{2}) \right| \right)\right)
$$
and
$$
B_{2,e_2}=- \left( 24\zeta'(-1) + \log(8\pi^2)
- \frac{11}{6} \log 2 +\frac{1}{12} \log\left( \left| \Delta(1/2 + i/2) \cdot \Delta(1+i) \right| \right)\right).
$$

Let us now consider the case when $k=2$.  From \eqref{zeros f-la N=2}, we have that only $e_1$ and $e_2$ can be vanishing points of $E_4^{(2)}$.  However,
there are two possibilities: Either $e_2$ is an order two vanishing point, and $E_4^{(2)}(z)\neq 0$ for all $z\neq e_2$ in a fundamental domain $\mathcal{F}_2$ of $X_2$, or $e_1$ is an order one
vanishing point and $E_4^{(2)}(z)\neq 0$ for all points $z\neq e_1$ in $\mathcal{F}_2$.
If the latter possibility is true, then $E_6^{(2)}(z) /E_4^{(2)}(z)$ would be a weight $2$ holomorphic modular form which vanishes only at $e_2$, which is not possible
since there is no weight two modular form on $X_N$ for any squarefree $N$ such that the surface $X_N$ has genus zero; see \cite{JST14}. Therefore, $E_4^{(2)}$ vanishes at $e_{2}$ of order two, and
there are no other vanishing points of $E_4^{(2)}$ on $X_2$.

Hence, in the notation of Theorem \ref{thm: factorization}, we have
 $ H_4^{(2)} (z):= H_{E_4^{(2)}}(z) = H_2(z, e_2)^2$, implying that
\begin{equation} \label{E 4,2}
 E_4^{(2)}(z) =C_{2,4}H_2(z, e_2)^2,
\end{equation}
where $|C_{2,4}|=e^{2B_{2,e_2}}$. This proves that $H_2(z, e_2)^2$ is a weight four holomorphic modular function on $\overline{\Gamma_0(2)^+}$.
If we combine \eqref{E 6,2} with \eqref{E 4,2} we get
$$
H_2(z, e_1)^2= \frac{C_{2,4}}{C_{2,6}^2} \cdot\frac{( E_6^{(2)}(z))^2}{ E_4^{(2)}(z)};
$$
in other words, $H_2(z, e_1)^2$ is a weight eight holomorphic modular function on $\overline{\Gamma_0(2)^+}$.

Furthermore, application of Proposition \ref{prop:Ell Kron limit one cusp} with $f_{2k} = E_4^{(2)}$ and $Z_{f_{2k}}=\{ e_2\}$ (with multiplicity two) together with Example \ref{ex: moonshine groups} and the representation formula \eqref{E_k, p proposit fla} yield \eqref{ell Eis at e_2}.

By applying Proposition \ref{prop:Ell Kron limit one cusp} with $f_{2k} = E_6^{(2)}$ and $Z_{f_{2k}}=\{e_1, e_2\}$ together with formula \eqref{ell Eis at e_2} we get the elliptic Kronecker limit formula for $ \mathcal{E}^{\mathrm{ell}}_{e_1}(z,s)$
$$
 \mathcal{E}^{\mathrm{ell}}_{e_1}(z,s)=-s\log\left( |E_6^{(2)}(z)| |E_4^{(2)}(z)|^{-1/2} |\Delta(z) \Delta(2z)|^{-1/6}\right) + O(s^2) \text{    as    } s \to 0.
$$
\end{example}

\vskip .06in
\begin{example} \rm
Consider the surface $X_5$.  There exist three order two elliptic elements, namely $e_1=i/\sqrt{5}$, $e_2=2/5 + i/5$, and
$e_3=1/2 + i/(2\sqrt{5})$.  The surface $X_{5}$ has genus zero and one cusp, hence $\vol_{\hyp}(X_5)=\pi$.
 Using the transformation rule for $E_6^{(5)}$, one concludes that the holomorphic form
$E_6^{(5)}$ must vanish at $e_1$, $e_2$ and $e_3$.
By the dimension formula \eqref{zeros f-la}, one sees that
$e_1$, $e_2$ and $e_3$ are the only zeros of $E_6^{(5)}$.
Theorem \ref{thm: factorization} then implies that
\begin{equation}\label{E_6_5}
E_6^{(5)}(z)= C_{5,6}H_{6}^{(5)}(z),
\end{equation}
where the absolute value of the constant $C_{5,6}$ is given by
$|C_5|=e^{B_{5,e_1}+B_{5,e_2}+B_{5,e_3}}$ and
\begin{multline*}
B_{5,e_1}+B_{5,e_2}+B_{5,e_3}= -3\left(24\zeta'(-1) + \log (8\pi^2)\right) - \log 50 \\
 +\frac{1}{12}\log \left(\abs{\Delta(i/\sqrt{5})\Delta(i\sqrt{5}) \Delta(2/5 + i/5) \Delta(2+i) \Delta(1/2 + i/(2\sqrt{5})) \Delta(5/2 + i\sqrt{5}/2)}\right) .
\end{multline*}
One can view (\ref{E_6_5}) as analogue of the Jacobi triple product formula.
\end{example}


\vskip .06in
\begin{remark} \rm
Let $N=p_1 \cdot \ldots \cdot p_r$ be a squarefree number.  Then the surface $X_{N}$ has one cusp.
Numerous results are known concerning the topological structure of $X_{N}$; see, for example, \cite{Cum04} and references therein.
As a consequence, one can develop a number of results similar to the above examples when $N=2$ or $N=5$.  In particular,
Theorem \ref{thm: factorization} holds, so one can factor any holomorphic Eisenstein series $E_{2k}^{(N)}$ of weight $2k$ into
a product of elliptic Kronecker limit functions, up to a factor of modulus one.
\end{remark}

\subsection{Congruence subgroups of prime level}

Consider the surface $M_p$ for a prime $p$.  The smallest positive integer $k$ such that there exists a weight $2k$ holomorphic form is $k=1$. As a result, we have the following corollary of Theorem \ref{thm: factorization}.

\begin{corollary}
Let $f_{2k,p}$ denote weight $2k\geq 2$ holomorphic form on the surface $M_p = \overline{\Gamma_0(p)}\setminus \h$ bounded at cusps and such that the constant term in its $q-$expansion is equal to $b_{f_{2k},p}$. Then,
$$
f_{2k,p}(z)= a_{f_{2k},p} \widetilde{B}_{f_{2k},p} \prod_{w \in Z(f_{2k},p)} \widetilde{H}_p(z,w),
$$
where $a_{f_{2k},p}$ is a complex constant of modulus one and
$$
\widetilde{B}_{f_{2k},p}=\abs{b_{f_{2k},p}} \prod_{w \in Z(f_{2k},p)} \left(\exp \left[-C_w \left( \frac{2p^2 \log p}{p^2-1} + C \right) \right]
 \abs{\sqrt[p-1]{\frac{\eta(p w) ^p}{\eta(w)}} \, \Im (w)}^{-C_w}\right)
$$
with $C:=\log (8\pi ^2) +24\zeta'(-1)$.
\end{corollary}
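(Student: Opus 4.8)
The plan is to obtain this statement as a direct specialization of Theorem \ref{thm: factorization} to the surface $M_p = \overline{\Gamma_0(p)}\setminus\h$, combined with the explicit evaluation of the cusp constant $B_{w,\infty}$ carried out in the earlier example on congruence subgroups of prime level (where $\widetilde{B}_{p,w}$ was computed). First I would verify that the hypotheses of Theorem \ref{thm: factorization} hold here: the cusp at $\infty$ has identity scaling matrix, $f_{2k,p}$ is a weight $2k$ holomorphic form whose $q$-expansion at $\infty$ has constant term $b_{f_{2k},p}$, and it is bounded (hence holomorphic) at every cusp of $M_p$. Granting this, Theorem \ref{thm: factorization} produces a complex constant $c$ with
$$
f_{2k,p}(z) = c\prod_{w\in Z(f_{2k},p)}\widetilde{H}_p(z,w), \qquad \abs{c}=\abs{b_{f_{2k},p}}\exp\Bigl(\sum_{w\in Z(f_{2k},p)}B_{w,\infty}\Bigr),
$$
where $\widetilde{H}_p(z,w)=H_{\overline{\Gamma_0(p)}}(z,w)$. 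Writing $a_{f_{2k},p}:=c/\abs{c}$, which has modulus one, and setting $\widetilde{B}_{f_{2k},p}:=\abs{c}$, the factorization already takes the asserted shape $f_{2k,p}(z)=a_{f_{2k},p}\,\widetilde{B}_{f_{2k},p}\prod_{w}\widetilde{H}_p(z,w)$; it then remains only to rewrite $\widetilde{B}_{f_{2k},p}$ as the stated product.

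The second step is to insert the explicit value of $B_{w,\infty}$. For $l$ indexing the cusp at $\infty$, formula \eqref{B_e_j} reads $B_{w,\infty}=-C_w(2-\log 2+\log\abs{\eta_\infty^4(w)\Im(w)}-\beta_{11}\vol_{\hyp}(M_p))$, and in the earlier example this was evaluated, using the scattering matrix of \cite{He83}, the volume $\vol_{\hyp}(M_p)=\pi(p+1)/3$, and the eta-quotient $\eta_\infty(z)=\sqrt[p-1]{\eta(pz)^p/\eta(z)}$ from Example \ref{ex: congruence subgr}, to give
$$
B_{w,\infty}=\widetilde{B}_{p,w}=-C_w\Bigl(\frac{2p^2\log p}{p^2-1}+C+\log\Bigl|\sqrt[p-1]{\tfrac{\eta(pw)^p}{\eta(w)}}\,\Im(w)\Bigr|\Bigr),
$$
with $C=\log(8\pi^2)+24\zeta'(-1)$ and $C_w$ as in \eqref{C_w}. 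Exponentiating and using $\exp(\sum_w B_{w,\infty})=\prod_w\exp(B_{w,\infty})$, each factor splits as
$$
\exp(B_{w,\infty})=\exp\Bigl[-C_w\Bigl(\frac{2p^2\log p}{p^2-1}+C\Bigr)\Bigr]\cdot\Bigl|\sqrt[p-1]{\tfrac{\eta(pw)^p}{\eta(w)}}\,\Im(w)\Bigr|^{-C_w},
$$
and multiplying by $\abs{b_{f_{2k},p}}$ reproduces exactly the stated expression for $\widetilde{B}_{f_{2k},p}$.

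The calculation in the second step is routine bookkeeping, so the only real content lies in step one: confirming that Theorem \ref{thm: factorization} genuinely applies. The delicate point is that $M_p$ has two cusps, at $\infty$ and at $0$, whereas the hypothesis ``$f_{2k,p}$ bounded at cusps'' is weaker than the ``non-vanishing in all cusps'' used in the theorem. I expect the main obstacle to be controlling the auxiliary function $F=H_{f_{2k}}/f_{2k}$ at the cusp $0$: one must ensure that boundedness keeps $\log\abs{F}$ of at most logarithmic growth there, so that it stays square-integrable and the harmonicity-plus-orthogonality argument of Theorem \ref{thm: factorization} forces $F$ to be constant; one should also check that the valence count underlying $C_{f_{2k}}=2k$ in \eqref{zeros f-la} is unaffected, which again presumes no zero of $f_{2k,p}$ at $0$. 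If a zero at $0$ were permitted, this would require the additional observation that $\widetilde{H}_p(z,w)$ carries matching order there. Once the $L^2$ argument goes through, evaluating the constant by letting $z\to\infty$ closes the proof precisely as in the theorem.
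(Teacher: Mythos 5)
Your proposal is correct and follows essentially the same route as the paper: the paper offers no separate proof, stating the result as an immediate consequence of Theorem \ref{thm: factorization} combined with the constants $\widetilde{B}_{p,w}$ already computed in the prime-level example of Section 4 (using $\eta_{\infty}(z)=\sqrt[p-1]{\eta(pz)^p/\eta(z)}$ and the scattering data from \cite{He83}), which is exactly your two steps. Your flagged ``delicate point'' about a possible zero at the cusp $0$ is a fair observation of a hypothesis mismatch in the paper itself (``bounded at cusps'' versus the theorem's ``non-vanishing in all cusps''); the paper only addresses that situation later, in its remarks on surfaces with several cusps, where factors of parabolic Kronecker limit functions would have to be included.
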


Let us now compute the constants $\widetilde{B}_{f_{2k},p}$ for two cases.

\begin{example}\rm
If $p=2$, then the surface $M_2$ has only one elliptic point, $e=1/2 + i/2$, which has order two.
Furthermore, $\vol_{\hyp}(M_p) = \pi$, hence formula \eqref{zeros f-la} with $k=1$ implies that the holomorphic form $E_{2,2}$ defined by \eqref{E_2,p} with $p=2$
vanishes only at $e$, and the vanishing is to order one. From the $q-$expansion \eqref{q-exp E_2,p} we have that $\abs{b_{E_{2,2},2}}=2-1=1$. Since $C_e = 1$, we get
$$
E_{2,2}(z)=a_2 \cdot \frac{1}{16  \sqrt[3]{4}\,\pi^2} \exp(- 24\zeta'(-1))   \abs{\frac{\eta (1/2 + i/2)}{\eta(1+i)^2}} \widetilde{H}_2(z,e),
$$
for some complex constant $a_2$ of modulus one.
In other words, the elliptic Kronecker limit function $\widetilde{H}_2(z,e)$ is a weight two modular form
on $\overline{\Gamma_0(2)}$.
\end{example}

\begin{example}\rm
If $p=3$, then the surface $M_3$ has only one elliptic point $e=1/2 + \sqrt{3}i/6$, which has order three. The volume of the surface $M_3$ is $4 \pi/3$, hence formula \eqref{zeros f-la} with $k=1$ implies that the holomorphic form $E_{2,3}$ vanishes only at $e$, of order two. Furthermore, $\abs{b_{E_{2,2},2}}=2$ and $C_e =1/2$,
so then
$$
E_{2,3}(z)= a_3 \cdot \frac{1}{12 \sqrt[4]{27}\, \pi^2} \exp(-24 \zeta'(-1)) \abs{\sqrt{\frac{\eta\left( 1/2 + i\sqrt{3}/6\right)}{\eta\left( 3/2 + i\sqrt{3}/2\right)^3}}}\widetilde{H}_3(z,e)^2,
$$
for some complex constant $a_3$ of modulus one.
\end{example}

\section{Additional considerations}

In this section, we use the elliptic Kronecker's limit function to prove Weil's reciprocity law.  In addition, we state various
concluding remarks.

\subsection{Weil reciprocity}

To conclude this article, we will use equation \eqref{Kronecker_elliptic} to prove Weil's reciprocity law which, for
the convenience of the reader, we now state.

\vskip .10in
\begin{theorem}{\bf [Weil Reciprocity]}\label{Weil_reciprocity}
Let $f$ and $g$ be meromorphic functions on the smooth, compact Riemann surface $M$.
Let $D_{f}$ and $D_{g}$ denote the divisors of $f$ and $g$, respectively, which we write as
$$
D_{f} = \sum m_{f}(P)P \,\,\,\,\,\textrm{and} \,\,\,\,\,D_{g} = \sum m_{g}(P)P.
$$
Then
$$
\prod\limits_{w_{j}\in D_{g}}f(w_{j})^{m_{g}(w_{j})}
= \prod\limits_{z_{i}\in D_{f}} g(z_{i})^{m_{f}(z_{i})}.
$$
\end{theorem}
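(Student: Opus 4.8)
The plan is to treat the elliptic Kronecker limit function $H_{\Gamma}(\cdot,w)$ as an automorphic prime form and to write every meromorphic function on $M$ as a product of such functions indexed by its divisor. Since $M$ is compact, $\Gamma$ is cocompact and has no cusps, so the parabolic sum in \eqref{Kronecker_elliptic} is empty. First I would establish the factorization: for $w\in M$ the function $z\mapsto H_{\Gamma}(z,w)$ is holomorphic and vanishes precisely on $\Gamma w$, and by \eqref{H e j transf. rule} and \eqref{C_w} the product $\prod_{z_i\in D_f}H_{\Gamma}(z,z_i)^{m_f(z_i)}$ transforms with weight $\tfrac{4\pi}{\vol_{\hyp}(M)}\sum_i m_f(z_i)/\mathrm{ord}(z_i)$. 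Applied to the divisor of a \emph{function}, the Riemann--Roch relation \eqref{zeros f-la} with $k=0$ forces $\sum_i m_f(z_i)/\mathrm{ord}(z_i)=0$, so this weight vanishes and the product is a meromorphic function on $M$ with the same divisor as $f$. On a compact surface two meromorphic functions with equal divisors agree up to a constant, hence
\[
f(z)=c_f\prod_{z_i\in D_f}H_{\Gamma}(z,z_i)^{m_f(z_i)},\qquad g(z)=c_g\prod_{w_j\in D_g}H_{\Gamma}(z,w_j)^{m_g(w_j)},
\]
as exact identities of meromorphic functions.

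Next I would insert these into both sides of the claim. Evaluating $f$ along $D_g$ and using $\deg D_g=\sum_j m_g(w_j)=0$ to cancel $c_f$ gives $\prod_j f(w_j)^{m_g(w_j)}=\prod_{i,j}H_{\Gamma}(w_j,z_i)^{m_f(z_i)m_g(w_j)}$, and the mirror computation gives $\prod_i g(z_i)^{m_f(z_i)}=\prod_{i,j}H_{\Gamma}(z_i,w_j)^{m_f(z_i)m_g(w_j)}$. Weil's identity is therefore equivalent to the statement that the two arguments of $H_{\Gamma}$ may be interchanged inside this double product; equivalently, that the Weil symbol $\prod_{i,j}\bigl(H_{\Gamma}(w_j,z_i)/H_{\Gamma}(z_i,w_j)\bigr)^{m_f(z_i)m_g(w_j)}$ equals $1$.

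The required interchange comes from a symmetry of the series \eqref{ell_eisen} itself. The complete elliptic Eisenstein series $\sum_{\gamma\in\Gamma}\sinh(d_{\hyp}(\gamma z,w))^{-s}=\mathrm{ord}(w)\,\mathcal{E}^{\mathrm{ell}}_{w}(z,s)$ is invariant under $z\leftrightarrow w$, because $d_{\hyp}$ is symmetric and $\Gamma$-invariant and the sum runs over all of $\Gamma$. Comparing the coefficients of $s$ at $s=0$ through \eqref{Kronecker_elliptic} in the cocompact case, I would deduce that $\mathrm{ord}(w)\log|H_{\Gamma}(z,w)|$ and $\mathrm{ord}(z)\log|H_{\Gamma}(w,z)|$ coincide up to a term depending on $z$ alone plus a term depending on $w$ alone. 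For divisors supported away from the elliptic points all orders are one, so $\log|H_{\Gamma}(w_j,z_i)|$ and $\log|H_{\Gamma}(z_i,w_j)|$ differ only by such single-variable terms; substituting into the double product and invoking $\sum_i m_f(z_i)=\sum_j m_g(w_j)=0$ makes each single-variable contribution cancel, which already proves the modulus form $\prod_j|f(w_j)|^{m_g(w_j)}=\prod_i|g(z_i)|^{m_f(z_i)}$.

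It remains to promote this to the stated equality of complex numbers, and this I expect to be the main obstacle. Since the factorizations are exact identities of holomorphic functions and since $H_{\Gamma}$ is holomorphic in its first variable, the symmetry of $\log|H_{\Gamma}|$ should upgrade to a factorization $H_{\Gamma}(z,w)=H_{\Gamma}(w,z)\,e^{\phi(z)-\phi(w)}\kappa$ with $\kappa$ a constant of modulus one; granting this, the Weil symbol becomes $\exp\!\big(\sum_{i,j}m_f(z_i)m_g(w_j)(\phi(w_j)-\phi(z_i))\big)\,\kappa^{(\deg D_f)(\deg D_g)}$, in which the $\phi(z_i)$-sum carries the factor $\sum_i m_f(z_i)=0$, the $\phi(w_j)$-sum carries $\sum_j m_g(w_j)=0$, and the constant sits at exponent $(\deg D_f)(\deg D_g)=0$; hence the symbol is $1$. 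The delicate point is exactly justifying this separated holomorphic form, that is, controlling the analytic dependence of $H_{\Gamma}(z,w)$ on its second variable well enough to pass from the real symmetry of $\log|H_{\Gamma}|$ to a genuinely constant phase. A secondary subtlety is the bookkeeping of the elliptic orders $\mathrm{ord}(w)$ when a divisor meets the elliptic locus, which I would handle either by first reducing, via bimultiplicativity of the symbol, to divisors supported at non-elliptic points with disjoint support, or by carrying the orbifold orders through the computation explicitly. In every case it is the two degree-zero conditions that rescue the argument, since they annihilate all the non-symmetric one-variable and constant factors.
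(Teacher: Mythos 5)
Your argument up to the modulus identity is correct, and in fact makes explicit a symmetry the paper leaves implicit: from $\mathrm{ord}(w)\,\mathcal{E}^{\mathrm{ell}}_{w}(z,s)=\mathrm{ord}(z)\,\mathcal{E}^{\mathrm{ell}}_{z}(w,s)$ and \eqref{Kronecker_elliptic} one gets $\mathrm{ord}(w)\log|H_{\Gamma}(z,w)|+c\log\Im(z)=\mathrm{ord}(z)\log|H_{\Gamma}(w,z)|+c\log\Im(w)$, and the two degree-zero conditions kill the one-variable terms; this is the paper's \eqref{Weilabs}. The genuine gap is the step you yourself flag: promoting this to an equality of complex numbers. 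Your proposed mechanism, $H_{\Gamma}(z,w)=H_{\Gamma}(w,z)\,e^{\phi(z)-\phi(w)}\kappa$ with $\phi$ holomorphic, cannot hold as stated: taking moduli forces $\Re\,\phi(z)=-c\log\Im(z)$ up to an additive constant, and $\log\Im(z)$ is not harmonic for the Euclidean Laplacian (indeed $(\partial_x^2+\partial_y^2)\log y=-1/y^{2}$), so it is not the real part of any holomorphic function. More fundamentally, \eqref{Kronecker_elliptic} determines only $|H_{\Gamma}(z,w)|$; the holomorphic function $H_{\Gamma}(\cdot,w)$ is then fixed only up to a unimodular constant that may depend arbitrarily on $w$, and on a compact surface there is no cusp at which to normalize (the paper's normalization ``real at $\infty$'' is unavailable). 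So the phase relation you need is really the assertion that \emph{some} joint normalization of $H_{\Gamma}$ satisfies a prime-form-like (anti)symmetry, and nothing in your argument constructs it; that construction is precisely the missing content.

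The paper sidesteps this issue by a structural trick you did not use: it factors the two functions in \emph{opposite} slots, namely $f(w)=a_{f}\prod_{i}H_{\Gamma}(z_{i},w)^{m_{f}(z_{i})}$ (divisor points in the first argument) and $g(z)=a_{g}\prod_{j}H_{\Gamma}(z,w_{j})^{m_{g}(w_{j})}$ (divisor points in the second). Then $\prod_{j}f(w_{j})^{m_{g}(w_{j})}$ and $\prod_{i}g(z_{i})^{m_{f}(z_{i})}$ both collapse, after the constants $a_{f},a_{g}$ drop out by degree zero, to the \emph{same} double product $\prod_{i,j}H_{\Gamma}(z_{i},w_{j})^{m_{f}(z_{i})m_{g}(w_{j})}$, so no interchange of the two arguments of $H_{\Gamma}$, hence no phase symmetry, is ever invoked; the residual multivaluedness is handled by fixing one fundamental domain containing the supports of $D_{f}$ and $D_{g}$, choosing a branch of $H_{\Gamma}$ there, and noting that the tessellation factors \eqref{multfactor} are trivial by degree zero. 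I recommend you either adopt this asymmetric factorization, after which your step 2 closes the proof immediately, or isolate and prove the normalized phase symmetry of $H_{\Gamma}$ as a separate lemma; as written, your final step rests on a claim that is false in the holomorphic form you state and undefined without a choice of normalization.
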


\vskip .10in
\begin{proof}
Consider the function
$$
I(s;f,g) = \sum\limits_{z_{i}\in D_{f}}\sum\limits_{w_{j}\in D_{g}}m_{f}(z_{i})m_{g}(w_{j})
{\cal E}^{\textrm{ell}}_{w_{j}}(z_{i},s).
$$
We shall compute the asymptotic expansion of $I(s;f,g)$ near $s=0$.
Since both $D_{f}$ and $D_{g}$ have degree zero, we immediately have
the equations
$$
\sum\limits_{z_{i}\in D_{f}}\sum\limits_{w_{j}\in D_{g}}m_{f}(z_{i})m_{g}(w_{j}) c = 0
$$
and
$$
\sum\limits_{z_{i}\in D_{f}}\sum\limits_{w_{j}\in D_{g}}m_{f}(z_{i})m_{g}(w_{j})
\log\left((\textrm{\rm Im}(z_{i}))^{c}\right)=0.
$$
Since $M$ is assumed to be smooth and compact, the terms in \eqref{Kronecker_elliptic}
involving the parabolic Eisenstein series do not appear.
Hence, we have the asymptotic expansion
\begin{align}\label{exp_I}
I(s;f,g) = -\sum\limits_{z_{i}\in D_{f}}\sum\limits_{w_{j}\in D_{g}}m_{f}(z_{i})m_{g}(w_{j})
\log\left(\vert H(z_i,w_j)\vert\right)\cdot s+ O(s^{2}) \,\,\,\,\,\textrm{\rm as $s \rightarrow 0$.}
\end{align}
Weil's reciprocity formula will be proved by evaluating
$$
\lim\limits_{s \rightarrow 0}s^{-1}I(s;f,g)
$$
in two different ways, one by first summing over the points in $D_{f}$ the sum over the points in $D_{g}$,
and the second way obtained by interchanging the order of summation.

To begin, we claim there exist constants $a_{f}$ and $a_{g}$ such that
$$
f(w) = a_{f}\prod\limits_{z_{i}\in D_{f}}H(z_{i},w)^{m_{f}(z_{i})}
\,\,\,\,\,\textrm{and}\,\,\,\,\,
g(z) = a_{g}\prod\limits_{w_{j}\in D_{g}}H(z,w_{j})^{m_{g}(w_{j})}.
$$
Indeed, both sides of each proposed equality are meromorphic functions with the
same divisors, hence, differ by a multiplicative constant.  Since both $D_{f}$ and $D_{g}$ have
degree zero, one has that
$$
\prod\limits_{z_{i}\in D_{f}}\vert a_{g}\vert^{m_{f}(z_{i})}
=\prod\limits_{w_{j}\in D_{g}}\vert a_{f}\vert^{m_{g}(w_{j})} = 1.
$$
Therefore, we can write the
lead term in \eqref{exp_I} in two ways,
yielding the identity
\begin{equation}\label{Weilabs}
\prod\limits_{w_{j}\in D_{g}}\vert f(w_{j})\vert^{m_{g}(w_{j})}
= \prod\limits_{z_{i}\in D_{f}}\vert g(z_{i})\vert^{m_{f}(z_{i})}.
\end{equation}
It remains to argue that (\ref{Weilabs}) holds without the absolute value signs, which can be
completed as follows.  First, apply the above arguments in a fundamental domain $\cal F$
of $M$
whose interior contains the support of $D_{f}$ and $D_{g}$.  On such a domain, one can choose a well-defined
branch of $H(z,w)$, hence we arrive at the equality
\begin{equation}\label{Weil}
\prod\limits_{w_{j}\in D_{g}}f(w_{j})^{m_{g}(w_{j})}
= \prod\limits_{z_{i}\in D_{f}} g(z_{i})^{m_{f}(z_{i})}
\end{equation}
viewing all points $z_{i}$ and $w_{j}$ as lying in $\cal F$.  Now, when tessellating by $\eta \in \Gamma$,
one introduces multiplicative factors of the form
\begin{equation}\label{multfactor}
\prod\limits_{w_{j}\in D_{g}}\epsilon_{\Gamma}(\eta)^{m_{g}(w_{j})}
\,\,\,\,\,\textrm{and}\,\,\,\,\,
\prod\limits_{z_{i}\in D_{f}} \epsilon_{\Gamma}(\eta)^{m_{f}(z_{i})}.
\end{equation}
Since $D_{f}$ and $D_{g}$ are degree zero, each term in (\ref{multfactor}) is equal to one.  Therefore,
one gets a well-defined extension of (\ref{Weil}) to all $z, w \in \mathbb{H} $, which completes the proof
of Theorem \ref{Weil_reciprocity}.
\end{proof}

\subsection{Unitary characters and Artin formalism}

As with parabolic Eisenstein series, one can extend the study of elliptic Eisenstein series to include
the presence of a unitary character.  More precisely, let $\pi: \Gamma \rightarrow U(n)$ denote an
$n$-dimensional unitary representation of the group $\Gamma$ with associated character $\chi_{\pi}$.  Let
us define
\begin{equation}\label{ell_eisen_pi}
{\cal E}^{\textrm{ell}}_{w}(z,s;\pi) =\sum\limits_{\eta \in  \Gamma} \chi_{\pi}(\eta)\sinh(d_{\mathrm{hyp}}(\eta z, w))^{-s}
\end{equation}
to be the elliptic Eisenstein series twisted by $\chi_{\pi}$.  Note that if $n=1$ and $\pi$ is trivial, then
the above definition is equal to $\textrm{ord}(w)$ times the series in (\ref{ell_eisen}).  (Again, we kept the
definition (\ref{ell_eisen}) in order to be consistent with the notation in \cite{JvPS14}).  In general terms, the meromorphic
continuation of (\ref{ell_eisen_pi}) can be studied using the methodology of \cite{JvPS14}, which depended on the spectral
expansion and small time asymptotics of the associated heat kernel.  As a result, we feel it is safe to say that one
subsequently can prove the continuation of (\ref{ell_eisen_pi}).

Having established the meromorphic continuation of (\ref{ell_eisen_pi}), one then can study the elliptic Kronecker limit
functions.  It would be interesting to place the study in the context of the Artin formalism relations (see \cite{JLa94} and
references therein).  The system of elliptic Eisenstein series associated to the representations $\pi$ will satisfy additive
Artin formalism relations, and, through exponentiation, the corresponding elliptic Kronecker limit functions will satisfy
multiplicative Artin formalism relations.  It would be interesting to carry out these computations in the setting of the
congruence groups $\Gamma_{0}(N)$ as subgroups of the moonshine groups $\Gamma_{0}(N)^{+}$, for instance, in order to
relate the above-mentioned computations for parabolic Kronecker limit functions.  It is possible that a similar approach
could yield further relations amongst the elliptic Kronecker limit functions.

\subsection{The factorization theorem in other cases}

\subsubsection{\bf Factorization for compact surfaces} \rm
If $M$ is compact then, in a sense, Theorem \ref{thm: factorization} becomes the following.  In the notation
of the proof Theorem \ref{thm: factorization}, the quotient
$$
F_{f_{2k}}(z):= \frac{H_{f_{2k}}(z)}{f_{2k}(z)}
$$
is a non-vanishing, bounded, holomorphic function on $M$, hence is constant, thus
$$
f_{2k}(z) = c_{f_{2k}} H_{f_{2k}}(z):= c_{f_{2k}} \prod_{w \in Z(f_{2k})} H_{\Gamma}(z,w)
$$
for some constant $c_{f}$.  The point now is to develop a strategy by which one can evaluate $c_{f_{2k}}$.  Perhaps the
most natural approach would be to study the limiting value of
$$
\widetilde{H}_{\Gamma}(z) := \lim\limits_{w \rightarrow z} \frac{H_{\Gamma}(z,w)}{z-w},
$$
which needs to be considered in the correct sense as a holomorphic form on $M$.  One can then express $c_{f_{2k}}$ in terms
of the first non-zero coefficient of $f_{2k}$ about a point $z \in Z(f_{2k})$, a product of the forms $H_{f_{2k}}(z,w)$ for
two different points in $Z(f_{2k})$ and $\widetilde{H}_{\Gamma}(z)$.  Such formulae could be quite interesting in various
cases of arithmetic interest.  We will leave the development of such identities for future investigation.

\subsubsection{\bf Factorization for surfaces with more than one cusp} \rm
It is evident that one can generalize Theorem \ref{thm: factorization} to the case when the holomorphic
form $f_{2k}$ vanishes in a cusp, or several cusps.  In such an instance, one includes factors of the parabolic Kronecker limit function in
the construction of $H_{f_{2k}}$.  The parabolic Kronecker limit function is bounded and non-vanishing in any cusp other than the
one to which it is associated, and the (fractional) order to which it vanishes follows from Theorem 1 of \cite{Ta86}.  As with Theorem
\ref{thm: factorization}, one can express any holomorphic modular form as a product of parabolic and elliptic Kronecker limit functions,
up to a multiplicative constant.  Furthermore, the multiplicative constant can be computed, up to a factor of modulus one, from the
value of the various functions at a cusp.

\vspace{5mm}
\noindent

\noindent
Jay Jorgenson \\
Department of Mathematics \\
The City College of New York \\
Convent Avenue at 138th Street \\
New York, NY 10031
U.S.A. \\
e-mail: jjorgenson@mindspring.com

\vspace{5mm}
\noindent
Anna-Maria von Pippich \\
Fachbereich Mathematik \\
Technische Universit\"at Darmstadt \\
Schlo{\ss}gartenstr. 7 \\
D-64289 Darmstadt \\
Germany \\
e-mail: pippich@mathematik.tu-darmstadt.de

\vspace{5mm}\noindent
Lejla Smajlovi\'c \\
Department of Mathematics \\
University of Sarajevo\\
Zmaja od Bosne 35, 71 000 Sarajevo\\
Bosnia and Herzegovina\\
e-mail: lejlas@pmf.unsa.ba
\end{document}